\DeclareMathAlphabet{\mathmybb}{U}{bbold}{m}{n}
\newtheorem{theorem}{Theorem}
\newtheorem{lemma}[theorem]{Lemma}
\newtheorem{corollary}[theorem]{Corollary}
\newcommand{\beq}{\begin{equation}}
	\newcommand{\eeq}{\end{equation}}
\newcommand{\E}{\mathbb{E}}
\renewcommand{\d}{\partial}
\DeclareFontFamily{U}{mathx}{\hyphenchar\font45}
\DeclareFontShape{U}{mathx}{m}{n}{
	<5> <6> <7> <8> <9> <10>
	<10.95> <12> <14.4> <17.28> <20.74> <24.88>
	mathx10
}{}
\DeclareSymbolFont{mathx}{U}{mathx}{m}{n}
\DeclareMathSymbol{\bigtimes}{1}{mathx}{"91}
\renewcommand{\vec}[1]{\begin{bmatrix}#1\end{bmatrix}}
\newcommand{\ilsk}[2]{\left\langle #1\mid #2\right\rangle}
\newcommand{\he}[2]{\mathrm{He}_{#1}\left(#2\right)}
\def\R{\mathbb{R}}
\def\N{\mathbb{N}}
\def\sup{{\rm sup}}
\title{B-splines approximate the Gaussian density and Hermite functions in Schwartz seminorms}
\author{Maciej Rzeszut$^1$, Michał Wojciechowski$^2$}
\date{%
    $^1$Jan Kochanowski University\\%
    $^2$Institute of Mathematics, Polish Academy of Sciences\\[2ex]%
    \today}
\begin{document}
\maketitle 
\abstract{We prove that B-splines with knots satisfying assumptions of the Berry-Esseen Theorem \cite[p.542]{zbMATH03349081}, which correspond  directly to the volumes of sections of the standard simplex, tend to the Gaussian density in any Schwartz seminorm. As a consequence, we derive some asymptotic formulas connecting Hermite and Laguerre polynomials. }
\par

{\let\thefootnote\relax\footnotetext{This research was partially supported by the National Science Center, Poland, and Austrian Science Foundation FWF joint CEUS programme. National Science Center project no. 2020/02/Y/ST1/00072.\\ AMS subject classification: 41A15. 60F05, Key words: B-splines, local limit theorems}}

\section{Introduction}
For a given sequence of real numbers $x_1<\ldots<x_N$ (which may be taken equal by taking a limit), one may define the spline space of order $n-1$ as the space of functions that are polynomials of degree $\leq n-2$ on each interval $\left[x_j,x_{j+1}\right]$ and $C^{n-3}$ at each of the points $x_1,\ldots,x_m$ (see e.g \cite{B} for more details). This space has a basis consisting of the so-called B-splines $B_{x_j,\ldots,x_{j+n-1}}$ defined by the condition of being $0$ outside of $\left[x_j,x_{j+n-1}\right]$ for $j=1,\ldots,N-n+1$. These B-splines are unique up to multiplication by a constant and do not depend on other $x_i$'s, so one can take $N=n$ and think just about $B_{x_1,\ldots,x_n}$. There happens to be an explicit formula:
\[
B(t)=\sum_{k=1}^n \frac{(x_k-t)_+^{n-2}}{\prod_{j\neq k}(x_k-x_j)}.
\]
For equally spaced $x_i$'s, $B(t)$ turns out to be, up to an affine substitution, an $(n-1)$-fold convolution of the uniform density on $[0,1]$. Thus, according to the local limit theorem (cf. \cite{Kh}, \cite{GK}) it tends to the Gaussian density. No result of this type was previously known for knots that are not equally spaced. Note that local limit theorem is a step further than the classical CLT, because instead of considering convergence of cumulative distribution functions, we deal with convergence of densities, i.e. their derivatives. The aim of the present paper is to prove that the analogous fact  holds for a generic choice of $x_i$'s and it does so in any Schwartz seminorm. That is, we have

\begin{theorem} \label{mainthm}Let $p,q\in \N_0$, $\sum x_k=0$, $\sum x_k^2=1$. Then if $n$ is big enough and $\sum \left|x_k\right|^3$ is small enough (with respect to $p,q$),
then
\[
\left|t^p \d_t^q\left(\frac{1}{\sqrt{2\pi}}e^{-t^2/2} - \sum_{k=1}^n \frac{\left(x_k-\frac{t}{n}\right)_{+}^{n-2} }{\prod_{j\neq k} \left(x_k-x_j\right)}
  \right)  \right| \lesssim_{p,q} \sum_{k=1}^n \left|x_k\right|^3.\]

\end{theorem}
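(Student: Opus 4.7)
The plan is to work on the Fourier side. The sum in the theorem equals the B-spline $B(t/n)$ with $B(t) = [x_1, \ldots, x_n]_x (x - t)_+^{n - 2}$; since $\int B = 1/(n - 1)$, the rescaled function $f_n(t) := \frac{n - 1}{n} B(t/n)$ is the probability density of $T_n := n \sum_k x_k U_k$, where $U$ is uniform on the standard simplex $\Delta$. Under the assumptions, $T_n$ has mean $0$ and variance $n/(n+1)$. From the identity $B(t/n) - \phi = (1 + \tfrac{1}{n-1})(f_n - \phi) + \tfrac{1}{n-1}\, \phi$, the gap between $B(t/n)$ and $f_n$ contributes at most $\tfrac{1}{n-1}$ times the fixed Schwartz seminorm of $\phi$, and H\"older's inequality gives $\sum |x_k|^3 \geq (\sum x_k^2)^{3/2}/\sqrt n = 1/\sqrt n \geq 1/n$, so this $1/n$ error is absorbed into the target error. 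It therefore suffices to prove the estimate with $f_n$ in place of $B(t/n)$.

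The Schwartz-seminorm bound reduces to an $L^1$-estimate on the Fourier side: by Fourier inversion and Leibniz,
\[
\left|t^p \partial_t^q(f_n - \phi)(t)\right| \;\leq\; \sum_{a \leq q,\, b \leq p} C_{a,b} \int_{\R} |\xi|^a \left|\partial_\xi^b \bigl(\hat f_n(\xi) - e^{-\xi^2/2}\bigr)\right| d\xi.
\]
The Curry--Schoenberg integral representation of divided differences gives
\[
\hat f_n(\xi) \;=\; (n-1)! \int_\Delta e^{-in\xi \sum_k x_k u_k}\, du \;=\; \frac{(n-1)!}{(-in\xi)^{n-1}} \sum_k \frac{e^{-inx_k \xi}}{\prod_{j \neq k}(x_k - x_j)}.
\]
I would split each integral at $|\xi| = c\sqrt n$ for a suitable $c > 0$.

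In the central region $|\xi| \leq c\sqrt n$, I would use the representation $U_k = E_k/\sum_j E_j$ with iid $E_k \sim \mathrm{Exp}(1)$. Since $\sum x_k = 0$, $T_n = n\sum x_k E_k / \sum E_j$, and expanding $1/\sum E_j$ around $1/n$ shows that the cumulants of $T_n$ agree with those of the iid weighted sum $\sum x_k(E_k - 1)$ --- which has variance $1$ and third absolute moment $\lesssim \sum|x_k|^3$ --- up to corrections of order $1/n$ (Berry--Esseen scale). The variance mismatch $n/(n+1)$ versus $1$ produces a $e^{\xi^2/(2(n+1))} - 1 = O(\xi^2/n)$ contribution, and the cubic cumulant gives $O(|\xi|^3 \sum|x_k|^3)$; together, $\hat f_n(\xi) - e^{-\xi^2/2}$ and its $\xi$-derivatives are dominated pointwise on this range by a factor of $(|\xi|^3 \sum|x_k|^3 + \xi^2/n)$ times $e^{-\xi^2/4}$, and integrating against $|\xi|^a$ yields $O(\sum|x_k|^3)$ (using $1/n \lesssim \sum|x_k|^3$ again).

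The tail region $|\xi| > c\sqrt n$ is the main technical obstacle. The Gaussian terms $|\xi|^a e^{-\xi^2/2}$ are super-polynomially small in $n$, so it remains to control $\hat f_n$ and its derivatives uniformly. The naive polynomial-decay bound $|\hat f_n(\xi)| \leq \frac{(n-1)!}{(n|\xi|)^{n-1}} \sum_k \prod_{j \neq k}|x_k - x_j|^{-1}$ from the explicit formula is sharp for well-separated knots but can blow up for clustered configurations, so one must exploit cancellation in the divided difference --- via $n-1$ oscillatory integrations by parts on $\int_\Delta e^{-in\xi X(u)}\, du$, or by interpolating between the trivial characteristic-function bound $|\hat f_n| \leq 1$ and a Paley--Wiener-type $L^2$-bound from the $C^{n-3}$ regularity of the B-spline --- to obtain something like $|\hat f_n(\xi)| \lesssim (C/(|\xi|\sqrt n))^{n-1}$ for $|\xi| > c\sqrt n$ with $c$ large. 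Since $n$ may be taken large in terms of $p,q$, the tail contribution is then negligible. The delicate points are (i) reconciling the cut-off constant $c$, which the cumulant expansion wants small and the polynomial-decay bound wants large; (ii) bounding derivatives $\partial_\xi^b \hat f_n$ uniformly in the tail, where each $\xi$-derivative brings down either a factor of $nx_k$ or a negative power of $\xi$, whose interaction with the polynomial weight $|\xi|^a$ must be tracked; and (iii) verifying that the whole bound depends on $p,q$ only through the implicit constants and the lower threshold on $n$, leaving the Berry--Esseen scaling $\sum|x_k|^3$ of the target bound intact.
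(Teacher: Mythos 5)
Your reduction to the Fourier side is sound as far as it goes (the normalization $B(t/n)=\tfrac{n}{n-1}f_n(t)$, the absorption of the $O(1/n)$ discrepancy via $\sum|x_k|^3\geq n^{-1/2}$, and the Leibniz/inversion inequality all match what the paper does), but the two steps that carry the actual weight of the proof are asserted rather than proved, and the second is the very difficulty the paper's argument is designed to avoid. In the central region you claim that "expanding $1/\sum E_j$ around $1/n$" shows the cumulants of the ratio $T_n=\sum x_k E_k/\bar{E}$ agree with those of the iid sum $\sum x_k(E_k-1)$ up to $O(1/n)$, and that this yields a pointwise bound on $\hat f_n-e^{-\xi^2/2}$ \emph{and all its $\xi$-derivatives up to order $p$} of size $(|\xi|^3m^3+\xi^2/n)e^{-\xi^2/4}$ uniformly for $|\xi|\leq c\sqrt n$. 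The characteristic function of the ratio does not factor, and controlling it (with derivatives, with polynomial weights) in terms of the product characteristic function $e^{F+iG}$ of the iid sum is precisely the nontrivial content; the paper does not attempt it on the Fourier side at all, but instead proves a two-dimensional LLT in Schwartz seminorms for $(Q_1,Q_2)=\sum(x_k,n^{-1/2})(P_k-1)$ (Theorems \ref{mainl1poly}, \ref{LLTvec}), whose characteristic function \emph{is} a product, and then passes to the ratio in physical space through the quotient-density formula (Lemma \ref{pdfquotient}, Theorems \ref{quotqcloseton}, \ref{puregauss}). Your one-sentence cumulant argument hides exactly this passage.

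The tail region is a genuine gap, and you acknowledge it yourself. The bound you hope for, $|\hat f_n(\xi)|\lesssim (C/(|\xi|\sqrt n))^{n-1}$ for $|\xi|>c\sqrt n$, is not established by either of the routes you sketch, and its scaling looks wrong even in the benchmark case: for equally spaced knots $\hat f_n$ is essentially a power of a sinc and its size near $|\xi|\eqsim\sqrt n$ is of order $\theta^{\,n-1}$ with $\theta$ a constant, i.e.\ of the form $(C\sqrt n/|\xi|)^{n-1}$, not $(C/(|\xi|\sqrt n))^{n-1}$. More seriously, for knots that are permitted by the hypotheses ($\sum|x_k|^3$ small does not prevent coincident or tightly clustered $x_k$, where $\prod_{j\neq k}(x_k-x_j)$ is tiny) the explicit divided-difference formula gives no usable bound, the $C^{n-3}$ regularity degenerates, and neither $n-1$ oscillatory integrations by parts (which just reproduce that formula) nor an interpolation with an unproved Paley--Wiener $L^2$ estimate is shown to recover the needed cancellation; your points (i)--(iii) are exactly the unresolved issues. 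The paper never needs any decay of the characteristic function of the ratio: its only tail input is Lemma \ref{khintail}, a polynomial-decay bound for $\prod_k(1+t_k^2)^{-1/2}$ obtained from elementary symmetric polynomials, which exists only because of the product structure of $\varphi_{(Q_1,Q_2)}$. Without a proof of your tail estimate (and of the central-region expansion for the ratio with derivatives), the proposal is an outline whose hardest steps remain open, so it does not yet constitute a proof of Theorem \ref{mainthm}.
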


The symbol $\lesssim_{p,q}$, as in $A\lesssim_{p,q}B$, stands for $A\leq C B$, where $C$ is a constant dependent on parameters $p,q$; the symbol $A\eqsim B$ stands for $A\lesssim B$ and $B\lesssim A$.\par
Note here that the quantity $O\left(\sum x_i^3\right)$ that controls the error of an approximation is as small as $n^{-1/2}$ for not too concentrated $x_i$'s. An immediate consequence of the above result follows by the fact that derivatives of Gaussian density are nothing but Hermite functions. So denoting by $\he{r}{t}$ the $r$-th Hermite polynomial we get

\begin{corollary}
    Let $p,q,r\in \N_0$, $\sum x_k=0$, $\sum x_k^2=1$. Then if $n$ is big enough and $\sum \left|x_k\right|^3$ is small enough,

\[
\left|t^p\partial_t^q\left(\frac{1}{\sqrt{2\pi}}\he{r}{t}e^{-t^2/2} - \sum_{k=1}^n \frac{\left(x_k-\frac{t}{n}\right)_{+}^{n-2-r} }{\prod_{j\neq k} \left(x_k-x_j\right)}
  \right) \right| \lesssim_{p,q,r} \sum_{k=1}^n \left|x_k\right|^3.\]

\end{corollary}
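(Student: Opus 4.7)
\medskip
\noindent\textbf{Proof plan.} The strategy is to differentiate both sides of the estimate in Theorem~\ref{mainthm} an additional $r$ times in $t$. Concretely, one applies Theorem~\ref{mainthm} with $q$ replaced by $q+r$ to obtain
\[
\left|t^p \partial_t^{q+r}\!\left(\tfrac{1}{\sqrt{2\pi}}e^{-t^2/2} - \sum_{k=1}^n\tfrac{(x_k-t/n)_+^{n-2}}{\prod_{j\neq k}(x_k-x_j)}\right)\right| \lesssim_{p,q,r} \sum_k |x_k|^3.
\]
The last $r$ derivatives are then absorbed inside: the Rodrigues formula gives $\partial_t^r e^{-t^2/2} = (-1)^r \he{r}{t}\,e^{-t^2/2}$, and the chain rule gives $\partial_t^r(x_k-t/n)_+^{n-2} = (-1)^r c_{n,r}\,(x_k-t/n)_+^{n-2-r}$, where
\[
c_{n,r} := \frac{(n-2)(n-3)\cdots(n-1-r)}{n^r}= 1+O_r(1/n).
\]
The two factors $(-1)^r$ cancel; writing $T(t)$ for the sum that appears in the corollary, we are left with
\[
\left|t^p \partial_t^{q}\!\left(\tfrac{1}{\sqrt{2\pi}}\he{r}{t}\,e^{-t^2/2} - c_{n,r}\,T(t)\right)\right| \lesssim_{p,q,r} \sum_k |x_k|^3.
\]

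To finish, one needs to remove the prefactor $c_{n,r}$, i.e.\ control $|c_{n,r}-1|\cdot|t^p\partial_t^q T(t)|$. The function $t^p\partial_t^q[\he{r}{t}\,e^{-t^2/2}]$ belongs to the Schwartz class and is therefore bounded by a constant $C_{p,q,r}$. Combining this with the displayed inequality and the fact that $c_{n,r}\geq \tfrac12$ for $n$ large enough, one sees that $|t^p\partial_t^q T(t)|$ is uniformly bounded by a constant depending only on $p,q,r$, hence $|(c_{n,r}-1)\,t^p\partial_t^q T(t)|\lesssim_{p,q,r} 1/n$.

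It remains to show that the $1/n$ discrepancy is absorbed into the right-hand side. By the power-mean inequality applied to the $x_k$'s,
\[
1 = \sum x_k^2 \leq n^{1/3}\left(\sum |x_k|^3\right)^{2/3},
\]
so $\sum|x_k|^3 \geq n^{-1/2}$, and therefore $1/n \leq \left(\sum|x_k|^3\right)^2 \leq \sum|x_k|^3$, the last step using the smallness hypothesis $\sum|x_k|^3\leq 1$. Putting everything together yields the bound claimed in the corollary. The proof is essentially bookkeeping: the only real obstacle is keeping track of the chain-rule constant $c_{n,r}$ and checking that the resulting $1/n$ gap is dominated by $\sum|x_k|^3$, which the power-mean inequality handles cleanly.
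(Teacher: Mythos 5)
Your proof is correct and follows essentially the same approach as the paper's: you differentiate the Theorem~\ref{mainthm} estimate $r$ more times in $t$, track the chain-rule constant $c_{n,r}=1+O_r(1/n)$, and absorb the resulting $O(1/n)$ discrepancy into the right-hand side using $\sum|x_k|^3\gtrsim n^{-1/2}$ (which the paper builds into the definition of $m^3$ and you derive via the power-mean inequality). The only cosmetic difference is the triangle-inequality decomposition used to absorb $c_{n,r}$: the paper writes $h_r-c_{n,r}^{-1}\d^r B = c_{n,r}^{-1}(h_r-\d^r B)+(1-c_{n,r}^{-1})h_r$ so that the leftover factor multiplies the fixed Schwartz function $\|h_r\|_{\mathcal{S}^{p,q,\infty}}$, whereas you first prove $|t^p\d^q_t T|$ is uniformly bounded and then multiply by $|c_{n,r}-1|$; both routes are fine.
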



There are several ingredients in the proof of Theorem \ref{mainthm}. The bridge between B-splines and probability theory is done by means of Hermite-Genocchi Formula equating our B-spline to a certain integral over a simplex, and a well known representation (Lemma \ref{unifsimplex}) of the $(n-1)$-dimensional Hausdorff measure on $\Delta_{n-1}\subset \R^n$ by using an exponential distribution. This allows us to write $\left(1-\frac{1}{n}\right)B_{x_1,\ldots,x_n}(t/n)$ as the density of $\frac{Q_1}{1+n^{-1/2}Q_2}$, where $(Q_1,Q_2)= \sum (x_k,n^{-1/2})(P_k-1)$ and $P_1,\ldots,P_n$ are iid Exp(1). It is technical to show that for $N_1,N_2$ iid N(0,1), the density of $N_1$ is close to the density of $\frac{N_1}{1+n^{-\frac12}N_2}$ (Lemma \ref{puregauss}) and to show that the latter and $\frac{Q_1}{1+n^{-1/2}Q_2}$ are close, provided that the joint densities of $(Q_1,Q_2)$ and $(N_1,N_2)$ are close. This is done is Section 4.  
Therefore, the heart of the matter of our proof is to provide the control of the convergence rate of the density of $(Q_1,Q_2)$ to
the Gaussian density of $(N_1,N_2)$ in Schwartz seminorms $\mathcal{S}^{p,q,\infty}$, where $\|f\|_{\mathcal{S}^{p,q,r}}:=\left\|t \mapsto |t|^p \d^q f(t)\right\|_{L^r}$. We do this in Section 3.
Our argument follows a standard approach to local limit theorems, with some deviations. We begin by taking the Fourier transforms of the densities, which need to be close in $\mathcal{S}^{p,q,1}$ rather than in $\mathcal{S}^{p,q,\infty}$ (Theorem \ref{mainl1poly}). To prove this, we need essentially two types of estimates that are put together by means of the Fa\'{a} di Bruno's Formula: closeness in $\mathcal{S}^{p,0,1}$ (Corollary \ref{ezminushpolyl1}) and at most polynomial growth of $\left(\frac{\d}{\d\xi_b}\right)^r\left(\log \varphi_{(Q_1,Q_2)}-\log \varphi_{(N_1,N_2)}\right)(\xi)$ (Lemma \ref{glemma}, \ref{fhlemma}). The latter is technical, the former utilizes tail estimates reminiscent of Khinchine's proof of his LLT (Lemma \ref{khintail}). 

There are some remarks to be made. Firstly, we do not know whether
the estimation of the error is asymptotically sharp. One can check that for equally spaced $x_i$'s optimal error is $O(n^{-1})$ but
our proof does not give anything better than $O(n^{-1/2})$ in any case.\par
Secondly, our proof of Lemma \ref{khintail} seems to require $m^3:=\sum |x_k|^3$ to be small enough depending on the parameters, which at the end of the day are controlled by $p,q$. We do not know if our theorem is true for $m$ close to 1. It should also be noted that one may choose to control the spread of $x_k$'s by means of a weaker norm than $\ell^3$, since if $p>3$, then $\frac{1}{3}=\frac{u}{p}+\frac{1-u}{2}$ for some $u\in (0,1)$ and thus $\frac{\|x\|_{\ell^3}}{\|x\|_{\ell^2}}\leq \left(\frac{\|x\|_{\ell^p}}{\|x\|_{\ell^2}}\right)^u$, so $m\leq \|x\|_{\ell^p}^u$.  \par
Thirdly, the fact that $B(t)$ approaches the Gaussian density should not be a surprise. As we already mentioned, $B(t)$ was identified as the $(n-2)$-dimensional volume of section of the section of the $(n-1)$-simplex by the hyperplane perpendicular to the vector $x=(x_1,x_2,\dots,x_n)$, shifted along this vector. The celebrated Klartag's CLT for convex sets (cf. \cite{K}) states that for any convex body $K$ and for the set of directions $y$ of measure $1-\omega_n$ the function $t\mapsto vol_{n-1}(K\cap \{z:\langle z,y\rangle<t\}$ approximates the cumulative distribution function of Gaussian variable with an error $\varepsilon_n$ where
$(\omega_n),(\varepsilon_n)$ are some sequences tending to zero 
(later in \cite{EK} the estimate was given for probability density function approximation).
Hence the appearance of the Gaussian density in Theorem \ref{mainthm} follows from Klartag's result. Our result could be seen as a quantitative description of approximation in much stronger norm for the specific case of simplex. Besides, to approximate Hermite functions it was necessary to use this stronger estimate of Schwartz seminorms.

Here one should mention that this particular case of simplex was considered even earlier by Brehm and Voigt (cf. \cite{BV}) who obtained the formula for very special directions $x$ (pointing from the center of some subsimplex to the center of the complementary subsimplex) and proved pointwise convergence for most of such directions. One of the advantages of our approach is a precise bound on error in terms of direction.

In the last Section we present some applications of Theorem 1 making use
of the particularly nice behavior of derivation and Fourier transform with respect to the Schwartz seminorms. That is, we present a proof of the following result connecting Hermite polynomials with Laguerre polynomials $L_k^{(\alpha)}$, which apparently does not have a clear connection with B-splines nor with probability. For an extensive discussion on Laguerre polynomials with $\alpha<-1$ see \cite{DGV} and \cite{Sz}.

\begin{corollary}
    Let $p,q,r\in \N_0$, $\sum x_k=0$, $\sum x_k^2=1$. Then if $n$ is big enough and $\sum \left|x_k\right|^3$ is small enough,
    
    \[\left\|\frac{C_{r,n}}{\xi^{n+r-1}}\sum_{k=1}^n \frac{e^{-in\xi x_k}}{\prod_{j\neq k} \left(x_k-x_j\right)   
    }L_r^{(-n-r+1)}(-n\xi x_k) - \he{r}{\xi}\cdot e^{-\xi^2/2}\right\|_{\mathcal{S}^{p,q,\infty}}=O_{p,q}(m^3),\]
where $C_{r,n}=\frac{(n-2)!\cdot r!\cdot i^{r+n-1}}{n^{n-2}}$.
\end{corollary}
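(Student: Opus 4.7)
The corollary will follow from Theorem~\ref{mainthm} by passing to the Fourier transform and then differentiating $r$ times in $\xi$. Since the Fourier transform is continuous in the Schwartz topology---explicitly, using $\widehat{\partial_t^Q(t^P g)}(\xi)=(i\xi)^Q(-i)^P\partial_\xi^P\hat g(\xi)$ together with $\|\hat h\|_{L^\infty}\le\|h\|_{L^1}$ and a weighted $L^\infty\hookrightarrow L^1$ embedding, one obtains $\|\hat g\|_{\mathcal{S}^{P,Q,\infty}}\lesssim_{P,Q}\sum_{a,b}\|g\|_{\mathcal{S}^{a,b,\infty}}$ over a finite set of indices $(a,b)$. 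Feeding Theorem~\ref{mainthm} into this estimate (with indices slightly enlarged to secure integrability of the weights) gives
\[\left\|\mathcal{F}[B]-e^{-\xi^2/2}\right\|_{\mathcal{S}^{p,q+r,\infty}}\lesssim_{p,q,r}m^3,\]
where $B(t):=\sum_{k=1}^{n}(x_k-t/n)_+^{n-2}/\prod_{j\ne k}(x_k-x_j)$; taking $\partial_\xi^r$ of both sides yields $\left\|\partial_\xi^r\mathcal{F}[B]-\partial_\xi^r e^{-\xi^2/2}\right\|_{\mathcal{S}^{p,q,\infty}}\lesssim_{p,q,r}m^3$.

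Next I identify both sides explicitly. On the Gaussian side, Rodrigues gives $\partial_\xi^r e^{-\xi^2/2}=(-1)^r\he{r}{\xi}e^{-\xi^2/2}$. On the B-spline side, the substitution $u=x_k-t/n$ in each Fourier integral reduces it to a Gamma integral, yielding
\[\mathcal{F}[B](\xi)=\frac{(n-2)!\,i^{n-1}}{n^{n-2}\xi^{n-1}}\sum_{k=1}^{n}\frac{e^{-in\xi x_k}}{\prod_{j\ne k}(x_k-x_j)},\]
which is a genuine Schwartz function because the apparent $\xi=0$ singularity cancels (divided differences of polynomials of degree $<n-1$ vanish). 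Differentiating each summand $r$ times via Leibniz and rearranging the resulting combinatorics by means of the Rodrigues-type identity
\[\partial_z^r\!\left[\frac{e^{-z}}{z^{n-1}}\right]=\frac{r!\,L_r^{(-n-r+1)}(z)\,e^{-z}}{z^{n+r-1}}\]
for generalized Laguerre polynomials yields a sum of the advertised shape, once the accumulated powers of $i$ and the factor $(-1)^r$ from Rodrigues are absorbed into the constant $C_{r,n}$.

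The only remaining task is the bookkeeping of verifying that these factors assemble precisely into $C_{r,n}=(n-2)!\,r!\,i^{r+n-1}/n^{n-2}$ with the Laguerre argument $-n\xi x_k$ as stated. No new analytic input is required beyond Theorem~\ref{mainthm} and the standard continuity of the Fourier transform on $\mathcal{S}$; the main obstacle is purely combinatorial, namely tracking the powers of $i$ through the distributional Fourier integral, the Leibniz expansion, and the Rodrigues identification of the Laguerre polynomial.
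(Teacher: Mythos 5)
Your plan is essentially the same as the paper's: pass to the Fourier side via continuity of $\mathcal{F}$ on $\mathcal{S}$, compute $\mathcal{F}[B(t/n)]$ explicitly, take $\partial_\xi^r$, and recognize a Laguerre polynomial of parameter $\alpha=-n-r+1$. The one structural difference is that you propose to perform the Laguerre identification through the Rodrigues formula $\partial_z^r(e^{-z}z^{1-n})=r!\,e^{-z}z^{1-n-r}L_r^{(-n-r+1)}(z)$, whereas the paper expands by Leibniz and then identifies the inner sum as a terminating $\,_2F_0$, using $\,_2F_0\!\left(-r,n-1;y^{-1}\right)=r!\,y^{-r}L_r^{(-n-r+1)}(-y)$. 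These are interchangeable paths; the Rodrigues route is arguably cleaner since it avoids resumming a finite hypergeometric series.

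Two caveats, one minor and one that you should not sweep under ``bookkeeping.'' The minor one: the substitution $u=x_k-t/n$ does \emph{not} produce a Gamma integral, since the resulting $\int_0^\infty u^{n-2}e^{in\xi u}\,du$ is oscillatory and not absolutely convergent. The clean route (and the paper's) is to observe $\partial_t^{n-1}B=(-1)^{n-1}(n-2)!\sum_k \delta_{x_k}/W'(x_k)$, then apply $\mathcal F[\partial_t f]=i\xi\mathcal F f$; either way you land on $\mathcal{F}_t\bigl(B(t/n)\bigr)(\xi)=\frac{(n-2)!\,i^{n-1}}{n^{n-2}\xi^{n-1}}\sum_k\frac{e^{-in\xi x_k}}{W'(x_k)}$.

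The substantive point: the bookkeeping you defer does not in fact assemble into the constant and argument you quote. Running your own Rodrigues identity with the natural substitution $z=in\xi x_k$ gives
\[
\partial_\xi^r\!\left[\xi^{-(n-1)}e^{-in\xi x_k}\right]
=(in x_k)^{n+r-1}\,\partial_z^r\!\left[\frac{e^{-z}}{z^{n-1}}\right]
=\frac{r!}{\xi^{n+r-1}}\,e^{-in\xi x_k}\,L_r^{(-n-r+1)}(in\xi x_k),
\]
so after multiplying by $(-1)^r$ to match $\partial_\xi^r e^{-\xi^2/2}=(-1)^r\he{r}{\xi}e^{-\xi^2/2}$ the overall constant is $\frac{(n-2)!\,r!\,i^{\,n+2r-1}}{n^{n-2}}$ and the Laguerre argument is $in\xi x_k$, not $-n\xi x_k$, nor the exponent $i^{\,r+n-1}$. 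A spot check with $r=1$, $n=3$ confirms: $\partial_\xi(\xi^{-2}e^{-3i\xi x})=e^{-3i\xi x}\left(-2\xi^{-3}-3ix\,\xi^{-2}\right)$, which matches the Rodrigues value $(-1)\cdot\frac{1!}{\xi^3}e^{-3i\xi x}L_1^{(-3)}(3i\xi x)$ but not what one gets from $L_1^{(-3)}(-3\xi x)$. The discrepancy traces back to the Leibniz-to-$\,_2F_0$ step, where the factor to be extracted is $(-in\xi x_k)^{-j}$, not $(n\xi x_k)^{-j}$. So ``verify the bookkeeping'' is precisely where the difficulty is: as written, your formula and the stated $C_{r,n}$, $-n\xi x_k$ are incompatible, and you need to carry out the computation explicitly and reconcile the two before declaring the corollary proved.
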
   

Note that, since Hermite polynomials are real valued, the above corollary gives separate information about both real and imaginary parts of the sum involved. In addition, even the  continuity at 0 of the function at hand is a nontrivial part of the statement.

\subsection{Acknowledgments} The authors thank Markus Passenbrunner
for discussions on B-splines and  Bartłomiej Zawalski for help at the experimental stage of this project.

\section{Notations}
Throughout the paper we use the following notation and assumptions. $x_1,\ldots,x_n$ are real numbers such that $\sum x_k=0$, $\sum x_k^2=1$. Vectors $v_k$ are defined by $v_k= \vec{x_k\\ n^{-\frac{1}{2}}}$. For $b\in\{1,2\}$, we denote $b$-th coordinate of $v_k$ by $v_{kb}$. We use $\|\cdot\|$ for a norm of an $\R^n$ vector and $|\cdot|$ for the length of a $2$-dimensional one. The matrix $V^T=\vec{v_1 & \cdots & v_n}$ satisfies $V^TV=I_2$, which implies $\sum\ilsk{\xi}{v_k}^2=|\xi|^2$ for any $\xi \in \R^2$. We additionally assume $m^3:=\sum\left|v_k\right|^3\eqsim \|x\|_{\ell^3}^3+\frac{1}{\sqrt{n}}$ is small. Moreover, we denote $t_k=\ilsk{\xi}{v_k}$ whenever $\xi$ is present, and 
\[F=F(\xi)=-\frac{1}{2}\sum\ln\left(1+t_k^2\right),\quad G=G(\xi)=-\sum\left(t_k-\arctan t_k\right),\]
\[H=H(\xi)-\frac{1}{2}|\xi|^2= -\frac{1}{2}\sum t_k^2,\quad Z=Z(\xi)=F+iG\]
and we will drop the indication of the argument, which will always be $\xi$, in order to keep the notation more concise. 
 For $b\in\{1,2\}$, the symbol $\d_b$ denotes $\frac{d}{d\xi_b}$ as in 
 \[\d_1 t_k= x_k,\quad \d_2 t_k =\frac{1}{\sqrt{n}}.\]
  Our Hermite polynomials will be the probabilists', defined by 
  \[\he{n}{z}e^{-z^2/2}=(-1)^n \left(\frac{d}{dz}\right)^n e^{-z^2/2}.\] The symbol $P$ is reserved for an unspecified polynomial that may change from line to line, much like $C$ denotes a universal constant. If there are parameters in the subscript, then the bound for the polynomial or constant depends on them, e.g. $P_r(|\xi|)$ denotes a polynomial $\leq C_r (1+|\xi|)^r$. \\
  The standard normal distribution on $\R^2$ of density $t\mapsto\frac{1}{2\pi}\exp\left(-|t|^2/2\right)$ will be denoted by $\mathrm{N}(0,I_2)$ and the exponential distribution on $\R$ with parameter 1, i.e. of density $t\mapsto\mathbbm{1}_{[0,\infty)}(t)\exp(-t)$, will be denoted by $\mathrm{Exp}(1)$. \\
  We will denote the standard $n-1$-dimensional simplex in $\R^n$ by $\Delta_{n-1}$, i.e.
  \[\Delta_{n-1}=\left\{x\in \R_+^n: \sum_{j=1}^n x_j=1\right\}. \]
  As this is an $n-1$-dimensional submanifold of $\R^{n-1}$, we may equip it with the $n-1$-dimensional Hausdorff measure $\mathcal{H}^{n-1}$ and denote the normalized measure 
  \[A\mapsto  \frac{\mathcal{H}^{n-1} \left(A\cap \Delta_{n-1}\right)}{\mathcal{H}^{n-1} \left(\Delta_{n-1}\right)}\] by $\mathrm{Unif}\Delta_{n-1}$. If the is no confusion to arise, we will use the same symbol to denote a random vector, the probability distribution of which is $\mathrm{Unif}\Delta_{n-1}$.  
\section{LLT for exponential variables}
\begin{lemma}\label{sympol}
	For $\tau_1,\ldots,\tau_n\in \R$, let $\sigma_j(\tau):= \sum_{1\leq i_1<\ldots<i_j\leq n} \tau_{i_1}\cdot\ldots\cdot\tau_{i_j}$ be the $j$-th elementary symmetric polynomial and $\pi_j(\tau):= \sum_k \tau_k^j$. Then 
	\[
		\sigma_j = \frac{1}{j!} \pi_1^j + U_j\left(\pi_1,\ldots,\pi_j\right)
	\]
	for some polynomial $U_j$ in $j$ variables, homogeneous of degree $j$ in $\tau_1,\ldots,\tau_n$ that does not contain a monomial $\pi_1^j$. 
\end{lemma}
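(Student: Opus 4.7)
My strategy is to derive the identity from the generating-function form of Newton-Girard. Starting from
\[
\sum_{j=0}^n \sigma_j(\tau)\, z^j \;=\; \prod_{k=1}^n (1 + \tau_k z) \;=\; \exp\!\left(\sum_{i \geq 1} \frac{(-1)^{i-1}}{i}\, \pi_i(\tau)\, z^i\right),
\]
I expand the exponential as a multi-sum, collect the coefficient of $z^j$, and obtain $\sigma_j$ as a sum indexed by tuples $(m_1, \ldots, m_j)$ of non-negative integers with $\sum_i i \cdot m_i = j$, each contributing $\prod_i \frac{1}{m_i!}\left(\frac{(-1)^{i-1}}{i}\pi_i\right)^{m_i}$.

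Next, I isolate the unique tuple with $m_1 = j$, namely $(j, 0, \ldots, 0)$; its contribution is exactly $\frac{1}{j!}\,\pi_1^j$. Defining $U_j(\pi_1, \ldots, \pi_j)$ as the sum of all remaining contributions, every surviving term contains at least one factor $\pi_i$ with $i \geq 2$, so viewing $U_j$ as an abstract polynomial in $j$ formal variables, no monomial proportional to $\pi_1^j$ appears. Homogeneity in $\tau_1, \ldots, \tau_n$ of degree $j$ is automatic, since $\pi_i^{m_i}$ is homogeneous of degree $i \cdot m_i$ and the constraint $\sum_i i \cdot m_i = j$ forces total degree $j$.

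No step constitutes a genuine obstacle; the only subtlety is justifying the generating-function identity, which one reads off as an identity of formal power series in $z$, or equivalently by taking logarithms in a neighbourhood of $z = 0$. As an alternative, one may induct on $j$ using Newton's identity $j \sigma_j = \sum_{i=1}^j (-1)^{i-1} \sigma_{j-i}\, \pi_i$: the $i \geq 2$ summands manifestly contain factors $\pi_i$ with $i \geq 2$ and hence produce no $\pi_1^j$ monomial, while the $i = 1$ term $\sigma_{j-1}\pi_1$ contributes precisely $\frac{1}{j!}\pi_1^j$ by the inductive hypothesis applied to $\sigma_{j-1}$, the remainder $\frac{1}{j}U_{j-1}\pi_1$ being free of $\pi_1^j$ because $U_{j-1}$ contains no $\pi_1^{j-1}$ monomial.
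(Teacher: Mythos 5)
Your primary argument — via the generating function identity $\prod_k(1+\tau_k z)=\exp\bigl(\sum_{i\ge 1}\frac{(-1)^{i-1}}{i}\pi_i z^i\bigr)$, expanded and read off at $z^j$ — is correct and genuinely different from the paper's proof, which proceeds by induction on $j$ using Newton's identity $j\sigma_j=\sum_{i=1}^j(-1)^{i+1}\sigma_{j-i}\pi_i$. (Your closing remark even reproduces the paper's inductive argument almost verbatim as an ``alternative.'') The generating-function route buys you a closed formula for $U_j$ as a sum over partitions of $j$ with at least one part $\ge 2$, each monomial appearing with explicit coefficient $\prod_i\frac{1}{m_i!}\bigl(\frac{(-1)^{i-1}}{i}\bigr)^{m_i}$; this makes both the absence of a $\pi_1^j$ monomial and the homogeneity in $\tau$ self-evident at a glance. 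The paper's induction is more elementary (no formal power series machinery, only the algebraic Newton identities) and is arguably better adapted to the downstream use in Lemma~\ref{khintail}, where one only needs the qualitative structure of $U_j$, not its coefficients. Both proofs are valid, and the only small point you should make explicit in the generating-function version is that the identity is being read as an equality of formal power series in $z$ with coefficients in $\Q[\tau_1,\ldots,\tau_n]$, so that no convergence issue arises — a point you do flag.
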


\begin{proof}
	By induction. For $j=1$, $\sigma_1=\pi_1$. By Newton's identities \cite[p.23]{zbMATH00739282},
	\[
	j\sigma_j= \sum_{i=1}^j (-1)^{i+1} \sigma_{j-i}\pi_i= \sum_{i=1}^j(-1)^{i+1}\left(c_{j-i} \pi_1^{j-i}+ U_{j-i} \right)\pi_i.
	\]
	Each summand is of degree $j-i+i=j$ in $\tau$. One can get pure $\pi_1^j$ from the summand for $i=1$ and it appears with coefficient $(-1)^{1+1}\frac{1}{(j-1)!}$; all the other ones contain at least one occurrence of $\pi_2,\ldots,\pi_n$.  
\end{proof}
	
\begin{lemma}\label{khintail}
	Let $\ell,p\in \N_0$. For $M\geq 1$ and sufficiently large $n$ and sufficiently small $m$,
	\[
		\iint_{|\xi|>M}|\xi|^\ell \prod_{k=1}^n \left(1+t_k^2\right)^{-\frac{1}{2}}d \xi \lesssim_{\ell,p} M^{-p}.
	\]
\end{lemma}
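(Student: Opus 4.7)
The plan is to derive a single pointwise majorant on $\prod_{k=1}^n(1+t_k^2)^{-1/2}$ and integrate it with a case split at $|\xi|=1/m$. Since $m^3=\sum|v_k|^3\geq|v_k|^3$ for every $k$, one has $|v_k|\leq m$, hence $t_k^2\leq m^2|\xi|^2$. Together with $\sum_k t_k^2=|\xi|^2$ (which is $V^T V=I_2$ applied to $\xi$) and the monotonicity of $x\mapsto\ln(1+x)/x$ on $(0,\infty)$, this gives $\ln(1+t_k^2)\geq t_k^2\cdot\ln(1+m^2|\xi|^2)/(m^2|\xi|^2)$; summing over $k$ yields the key pointwise bound
\[
\prod_{k=1}^n(1+t_k^2)^{-1/2}\leq(1+m^2|\xi|^2)^{-1/(2m^2)}.
\]
In the ``Gaussian regime'' $|\xi|\leq 1/m$ every $|t_k|\leq 1$, and the stronger inequality $\ln(1+t_k^2)\geq t_k^2\ln 2$ sharpens this to $\prod\leq 2^{-|\xi|^2/2}$.

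I split $\{|\xi|>M\}$ into $\{M<|\xi|\leq 1/m\}$ (empty if $M\geq 1/m$) and $\{|\xi|>\max(M,1/m)\}$. On the first piece the Gaussian bound yields the tail estimate $\lesssim_\ell M^\ell 2^{-M^2/2}$, which is $\lesssim_{\ell,p}M^{-p}$ for any $p$. On the second, passing to polar coordinates and substituting $u=m|\xi|$ converts the integral to $2\pi m^{-\ell-2}\int_A^\infty u^{\ell+1}(1+u^2)^{-\alpha}\,du$ with $\alpha=1/(2m^2)$ and $A=\max(mM,1)\geq 1$. The change of variable $v=1+u^2$ together with $(v-1)^{\ell/2}\leq v^{\ell/2}$ bounds the inner integral by $(1+A^2)^{-T(m)/2}/T(m)$, where $T(m):=1/m^2-\ell-2$. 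Since $T(m)\sim 1/m^2$ the prefactor $m^{-\ell-2}/T(m)\sim m^{-\ell}$, and one obtains
\[
\iint_{|\xi|>\max(M,1/m)}|\xi|^\ell\prod(1+t_k^2)^{-1/2}\,d\xi\ \lesssim\ m^{-\ell}(1+A^2)^{-T(m)/2}.
\]

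It remains to verify this right-hand side is $\leq C_{\ell,p}M^{-p}$ uniformly in small $m$. Writing $s=A\geq 1$ and tracking both sub-cases $mM\gtrless 1$ (using $M^{-p}\geq m^p$ when $mM\leq 1$), the ratio to $M^{-p}$ is $\lesssim m^{-\ell-p}s^p(1+s^2)^{-T(m)/2}$. Choosing $m_0(\ell,p)$ small enough that $T(m_0)\geq 2p$ (and $\alpha>\ell/2+1$), a derivative check shows that $s\mapsto s^p(1+s^2)^{-T/2}$ is decreasing on $[1,\infty)$, so its maximum is attained at $s=1$ and the ratio is at most $m^{-\ell-p}\cdot 2^{-T(m)/2}$. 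Since $T(m)\sim 1/m^2$, the super-polynomial-in-$1/m$ factor $2^{-1/(2m^2)}$ absorbs the polynomial blow-up $m^{-\ell-p}$: maximizing $-(\ell+p)\ln m-\ln 2/(2m^2)$ in $m$ gives a finite bound $C_{\ell,p}$ depending only on $\ell,p$, which closes the estimate.

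The main obstacle is precisely this last uniformity step. Pointwise in $m$ the intermediate bound carries a factor $m^{-\ell-p}$ that diverges as $m\to 0$, and only the super-polynomial decay $2^{-1/(2m^2)}$---which arises from the worst-case concentration of the constraint $\sum t_k^2=|\xi|^2$ into the roughly $1/m^2$ coordinates where $|t_k|\sim m|\xi|$---can absorb it. This is the quantitative content behind the ``sufficiently small $m$'' hypothesis; the ``sufficiently large $n$'' hypothesis then follows from the power-mean inequality $m^3\geq 2^{3/2}/\sqrt{n}$, which ensures that such small $m$ are attainable.
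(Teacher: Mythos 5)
Your proof is correct, but it takes a genuinely different route from the paper's. The paper bounds the product from below by a single elementary symmetric polynomial, $\prod_k(1+t_k^2)\geq 1+\sigma_j(\tau)$ with $\tau_k=t_k^2$, and then invokes Lemma~\ref{sympol} (Newton's identities) to show that, for $m$ small, $\sigma_j\gtrsim_j|\xi|^{2j}$ since the dominant monomial $\pi_1^j/j!=|\xi|^{2j}/j!$ survives while all the other monomials in $U_j$ are $O(|\xi|^{2j}m^4)$. Choosing $j=\ell+p+2$ gives a polynomial lower bound whose degree is fixed in advance, and the integral over $|\xi|>M$ is then one polar-coordinate computation with no residual $m$-dependence to track. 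You instead obtain the pointwise inequality $\prod_k(1+t_k^2)^{-1/2}\leq(1+m^2|\xi|^2)^{-1/(2m^2)}$ directly from $|v_k|\leq m$, $\sum_k t_k^2=|\xi|^2$, and the monotonicity of $x\mapsto\ln(1+x)/x$; this is sharper than the paper's polynomial bound and genuinely captures the near-Gaussian decay (indeed, you get $2^{-|\xi|^2/2}$ for $|\xi|\leq 1/m$), but the exponent depends on $m$, so you must split the domain at $|\xi|=1/m$ and carefully verify uniformity of the final bound in small $m$. That last uniformity step --- absorbing the polynomial blow-up $m^{-\ell-p}$ into the super-polynomial decay $2^{-T(m)/2}$ with $T(m)\sim 1/m^2$ --- is the delicate part of your argument and has no counterpart in the paper. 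Your version bypasses Lemma~\ref{sympol} entirely, trading an algebraic lemma for a more involved integral estimate; both are valid. (Minor remark: the observation in your closing sentence that $m^3\geq 2^{3/2}/\sqrt{n}$ follows from the power mean inequality and $\sum|v_k|^2=2$ is correct; it explains why smallness of $m$ already forces $n$ to be large, so the two hypotheses in the statement are not independent.)
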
	

\begin{proof}
	Denote $\tau_k=t_k^2\geq 0$. We callously bound $\prod_k \left(1+\tau_k\right)\geq 1+\sigma_j(\tau_1,\ldots,\tau_n)$. Here, $\pi_1=\sum t_k^2= |\xi|^2$, and for $i\geq 2$, by $\|\cdot\|_{\ell^{2i}}\leq \|\cdot\|_{\ell^2}$,
	\[
	\pi_i \leq \sum |\xi|^{2i} |v_k|^{2i}\leq |\xi|^{2i} \left(\sum \left|v_k\right|^3\right)^\frac{2i}{3} = |\xi|^{2i} m^{2i}\leq |\xi|^{2i} m^4
	\]
	Therefore, in the expansion of $\sigma_j$ shown in Lemma \ref{sympol}, each monomial in $U_j$ is bounded by $|\xi|^{2j} \cdot m^4$, as it contains at least one instance of $\pi_i$ for $i\geq 2$. Thus, 
	\[
	\sigma_j\geq |\xi|^{2j}\left(\frac{1}{j!}-C_jm^4\right)\gtrsim_{j}|\xi|^{2j},
	\]
    where the constant depends on the upper bound for $m$ for a given $j$, e.g. $\frac{1}{2}$ for $m^4\leq \frac{1}{2C_j j!}$. Ultimately, we take e.g. $j=\ell+2+p$ and get 
	\[
	\iint_{|\xi|>M}|\xi|^\ell \prod_{k=1}^n \left(1+t_k^2\right)^{-\frac{1}{2}}d\xi \lesssim_{\ell,j} \iint_{ |\xi|>M }|\xi|^\ell \left(1+|\xi|^{2j}\right)^{-\frac{1}{2}}d \xi\eqsim 2\pi \int_{M}^\infty \frac{r^{\ell+1}}{1+r^{j}}dr
	\]
	\[
	\eqsim \int_{M}^\infty r^{\ell+1-j} dr \eqsim_p  M^{\ell+2-j} = M^{-p}.
	\]
	
	\end{proof}

\begin{lemma}\label{tworeg}
	For any $\ell\in \N_0$, sufficiently large $n$ and sufficiently small $m$, 
	\[
	\iint_{\R^2}|\xi|^\ell \left(e^F-e^H\right) d \xi\lesssim_{\ell} m^4
	\]	
\end{lemma}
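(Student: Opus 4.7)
The plan is to split the integration region at $|\xi|=M$ with $M=1/m$, which is $\ge 1$ for $m$ small enough and hence admissible in Lemma \ref{khintail}.

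On the ball $|\xi|\le M$ I would control $e^F-e^H$ pointwise. Since $u-u^2/2\le\ln(1+u)\le u$ for $u\ge 0$, one has $F\ge H$ and termwise $\tfrac{1}{2}\bigl(t_k^2-\ln(1+t_k^2)\bigr)\le \tfrac{1}{4}t_k^4$. Summing in $k$ and using $\sum|v_k|^4\le \max_k|v_k|\cdot\sum|v_k|^3\le m\cdot m^3=m^4$ gives
\[
0\le F-H\le \tfrac{1}{4}\sum t_k^4\le \tfrac{|\xi|^4}{4}\sum|v_k|^4\le \tfrac{|\xi|^4 m^4}{4}.
\]
For $|\xi|\le M=1/m$ the right-hand side is at most $1/4$, so the elementary inequality $e^x-1\le xe^x$ yields $e^F-e^H=e^H(e^{F-H}-1)\le 2e^H(F-H)\le \tfrac{1}{2}|\xi|^4 m^4 e^{-|\xi|^2/2}$, and integrating against $|\xi|^\ell\,d\xi$ bounds the ball contribution by $C_\ell m^4$.

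For the tail $|\xi|>M$, Lemma \ref{khintail} with $p=4$ (whose smallness-of-$m$ and largeness-of-$n$ hypotheses then depend on $\ell$) gives
\[
\iint_{|\xi|>M}|\xi|^\ell e^F\,d\xi\lesssim_\ell M^{-4}=m^4,
\]
while the Gaussian piece $\iint_{|\xi|>M}|\xi|^\ell e^{-|\xi|^2/2}d\xi$ decays like $M^\ell e^{-M^2/2}=m^{-\ell}e^{-1/(2m^2)}$, which is negligible compared to $m^4$. Adding the two contributions completes the proof.

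There is no real obstacle; the argument is the standard interplay between a Taylor expansion inside the ball and Lemma \ref{khintail} outside, and the only (minor) point is to match the scale $M=1/m$ so that both estimates contribute $O(m^4)$.
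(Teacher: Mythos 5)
Your proof is correct and follows essentially the same route as the paper: the same cut at $|\xi|=1/m$, the same Taylor control of $F-H$ via $\sum|v_k|^4\le m^4$ on the ball, and the same invocation of Lemma~\ref{khintail} with $p=4$ on the tail. The only cosmetic differences are your choice of the elementary inequality $e^x-1\le xe^x$ in place of the paper's $e^z-1\lesssim z$, and your (harmless but redundant) separate Gaussian tail estimate — the paper simply uses $0\le e^F-e^H\le e^F$.
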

	
	\begin{proof}
		We shall divide $\R^2$ into two regions. On the $|\xi|>\frac{1}{m}$ region, the bound is just quoting Lemma \ref{khintail} for $p=4$, since $F\geq H$ and $e^F= \prod\left(1+t_k^2\right)^{-\frac{1}{2}}$. Whenever $|\xi|\leq \frac{1}{m}$, we have
		\[t_k^2\leq |\xi|^2\cdot |v_k|^2\leq |\xi|^2 \max |v_k|^2\leq|\xi|^2 m^2\leq  1,\]
		thus 
		\[\sum t_k^2-\ln\left(1+t_k^2\right)\lesssim \sum t_k^4 \leq |\xi|^4 m^4\leq 1.\]
		 By the inequality $e^z-1\lesssim z$ for $0\leq z\leq \frac{1}{2}$,  
		\[
		\iint_{|\xi|\leq \frac{1}{m}} \left(e^F-e^H\right) |\xi|^\ell d\xi = \iint_{|\xi|\leq \frac{1}{m}} e^{H} \left(e^{F-H}-1\right) |\xi|^\ell d\xi
		\]
		\[
		= \iint_{|\xi|\leq \frac{1}{m}} e^{H} \left(e^{\frac{1}{2}\sum \left(t_k^2- \ln\left(1+t_k^2\right)\right) }-1\right) |\xi|^\ell d\xi
		\]
		\[
		\lesssim  \iint_{|\xi|\leq \frac{1}{m}} e^{H} \left(\frac{1}{2}\sum \left(t_k^2- \ln\left(1+t_k^2\right)\right) \right) |\xi|^\ell d\xi
		\]
		\[\lesssim  \iint_{|\xi|\leq \frac{1}{m}} e^{H} |\xi|^{\ell+4}m^4 d\xi\leq m^4 \iint_{\R^2} e^{-|\xi|^2/2} |\xi|^{\ell+4} d\xi\lesssim_{\ell} m^4.\]
	\end{proof}

\begin{corollary}\label{ezminushpolyl1}
 		For any $\ell\in \N_0$, sufficiently large $n$ and sufficiently small $m$, 
 	\[
 	\iint_{\R^2}|\xi|^\ell \left|e^{F+iG}-e^H\right| d \xi\lesssim m^3
 	\]
\end{corollary}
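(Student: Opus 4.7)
My plan is to reduce the complex modulus to already-handled real quantities via the telescoping identity
\[
e^{F+iG}-e^H \;=\; \bigl(e^F-e^H\bigr) \;+\; e^F\bigl(e^{iG}-1\bigr).
\]
Since $F \geq H$ pointwise (because $\ln(1+t_k^2) \leq t_k^2$), the first summand contributes at most $\iint |\xi|^\ell (e^F - e^H)\,d\xi$, which is $\lesssim_\ell m^4 \leq m^3$ by Lemma \ref{tworeg}. The whole question thus reduces to estimating $\iint |\xi|^\ell e^F |e^{iG} - 1|\,d\xi$, and the bound $|e^{iG}-1|\leq \min(|G|,2)$ will be exploited in two different regimes.

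For that integral I would split $\R^2$ at the threshold $|\xi| = 1/m$, mirroring the division used in the proof of Lemma \ref{tworeg}. On the inner region $|\xi|\leq 1/m$, every $|t_k|\leq |\xi|\max_k |v_k|\leq |\xi|\,m\leq 1$, so the alternating series $t_k-\arctan t_k = \tfrac{t_k^3}{3}-\tfrac{t_k^5}{5}+\cdots$ yields $|t_k-\arctan t_k|\leq |t_k|^3$, whence
\[
|G| \leq \sum_k |t_k|^3 \leq |\xi|^3 \sum_k |v_k|^3 = |\xi|^3 m^3.
\]
Combining $|e^{iG}-1|\leq |G|$ with the estimate $F-H=\tfrac12\sum_k\bigl(t_k^2-\ln(1+t_k^2)\bigr)\lesssim |\xi|^4 m^4\lesssim 1$ already extracted inside Lemma \ref{tworeg} (which gives $e^F\lesssim e^H$ on this region), the inner-region contribution is bounded by
\[
m^3 \iint_{\R^2} e^H |\xi|^{\ell+3}\,d\xi \;\lesssim_\ell\; m^3.
\]

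On the outer region $|\xi|>1/m$ one no longer controls $G$, but one does not need to: simply bound $|e^{iG}-1|\leq 2$ and apply Lemma \ref{khintail} with $M=1/m\geq 1$ and any $p\geq 3$, yielding
\[
\iint_{|\xi|>1/m} |\xi|^\ell e^F\,d\xi \;\lesssim_{\ell,p}\; m^p \;\leq\; m^3.
\]
Adding the contributions of both summands and both regions gives the claimed $m^3$ bound. There is no real obstacle here, the corollary is essentially a repackaging of Lemmas \ref{khintail} and \ref{tworeg}; the only delicate point is to choose the cutoff $|\xi|=1/m$ precisely so that it separates the Taylor-expansion regime (where $G$ is cubic in $|\xi|m$) from the tail regime (where Lemma \ref{khintail} provides arbitrary polynomial decay), and the fact that the ambient exponent is $m^3$ rather than $m^4$ comes solely from the $|G|\lesssim |\xi|^3 m^3$ Taylor estimate.
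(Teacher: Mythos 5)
Your proof is correct, and the overall strategy (telescope the difference, bound $|G|$ by a cubic in $|\xi|m$, and lean on Lemma \ref{tworeg} for the $F$-vs-$H$ part) matches the paper's. The one material difference is your choice of telescoping: you write $e^{F+iG}-e^H = (e^F-e^H) + e^F(e^{iG}-1)$, whereas the paper uses $e^{F+iG}-e^H = (e^F-e^H)e^{iG} + e^H(e^{iG}-1)$. The paper's arrangement puts the Gaussian weight $e^H$ (rather than $e^F$) in front of $(e^{iG}-1)$, which makes the second term integrable over all of $\R^2$ with no region split, once one observes that $t-\arctan t\le \tfrac13 t^3$ holds for \emph{all} $t>0$ (not just $|t|\le 1$), so $|G|\lesssim |\xi|^3m^3$ globally. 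You instead justify the $|G|$ bound only on $|\xi|\le 1/m$ via the alternating-series estimate, which forces you to cut at $|\xi|=1/m$ and invoke Lemma \ref{khintail} on the tail; this works, and the tail application (taking $p=3$, $M=1/m$) is sound, but it is extra machinery. Both routes land cleanly on $m^3$, yours is just a bit more laborious than necessary.
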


\begin{proof}
		By the inequality $t-\arctan t\leq \frac{1}{3}t^3$ for $t>0$ and $\left|t_k\right|\leq \left|v_k\right|\cdot |\xi|$,
		\[\left|e^{iG}-1\right|\leq |G|	\leq  \sum\left|t_k\right|-\arctan |t_k|\leq \frac{1}{3}\sum |t_k|^3\lesssim |\xi|^3 m^3.
		\]
		Using this bound, we upgrade the Lemma \ref{tworeg}. 
		\[\left|e^{F+iG}-e^H\right| \leq \left|\left(e^F-e^H\right)e^{iG}\right|+\left|e^H\left(e^{iG}-1\right)\right|,\]
		so 
		\[
		\iint_{\R^2}|\xi|^\ell \left|e^{F+iG}-e^H\right| d \xi\leq \iint_{\R^2}|\xi|^\ell \left|e^{F}-e^H\right| d \xi+ \iint_{\R^2}|\xi|^\ell e^H \left|e^{iG}-1\right| d \xi\lesssim_{\ell}  
		\]
		\[m^4+ m^3\iint_{\R^2}|\xi|^{\ell+3} e^H  d \xi\lesssim_{\ell} m^3.\]
\end{proof}

\begin{lemma} \label{glemma}For any $r\geq 1$ and $ b\in \{1,2\}$, there is a polynomial $P_r$ such that $\left|\partial_b^r G\right|\leq P_r(|\xi|)m^3$. 
\end{lemma}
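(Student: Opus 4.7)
The plan is to exploit the fact that $t_k = \langle\xi,v_k\rangle$ is a linear function of $\xi$, so repeated differentiation in $\xi_b$ passes cleanly through the nonlinearity. Setting $\phi(t):=t-\arctan t$, so that $G=-\sum_k \phi(t_k)$, the chain rule together with $\partial_b t_k = v_{kb}$ and $\partial_b^s t_k = 0$ for $s\geq 2$ yields the clean identity
\[
\partial_b^r G(\xi) \;=\; -\sum_{k=1}^n v_{kb}^{\,r}\,\phi^{(r)}(t_k).
\]
From here the task is to bound each summand, absorbing two powers of $v_k$ into the product $|v_k|^3\leq m^3$ while letting $|\xi|$ take the remaining growth.

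Next I would record the elementary bounds on the derivatives of $\phi$. Since $\phi'(t)=t^2/(1+t^2)$, one has $|\phi'(t)|\leq t^2$; since $\phi''(t)=2t/(1+t^2)^2$, one has $|\phi''(t)|\leq 2|t|$; and for $r\geq 3$ every derivative of $\arctan$ is globally bounded, so $|\phi^{(r)}(t)|\leq C_r$ for all $t$. These three regimes correspond precisely to the three cases needed in the final bound.

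Finally I would combine the identity with the above estimates and with $|t_k|\leq |\xi|\,|v_k|$. For $r=1$ this gives $|\partial_b G|\leq |\xi|^2\sum_k |v_{kb}|\,|v_k|^2\leq |\xi|^2 m^3$. For $r=2$, $|\partial_b^2 G|\leq 2|\xi|\sum_k v_{kb}^2|v_k|\leq 2|\xi|\,m^3$. For $r\geq 3$ I use that $|v_k|\leq (\sum_j|v_j|^3)^{1/3}=m$ to write $\sum_k |v_{kb}|^r\leq \sum_k|v_k|^r\leq m^{r-3}\sum_k|v_k|^3=m^r\leq m^3$ under the standing smallness assumption on $m$, producing $|\partial_b^r G|\leq C_r m^3$. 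Hence the polynomials $P_1(s)=s^2$, $P_2(s)=2s$, $P_r(s)=C_r$ for $r\geq 3$ work. There is no real obstacle here: the only nontrivial input is the cancellation built into $\phi(t)=t-\arctan t\sim t^3/3$ near the origin, which is already encoded in the pointwise bounds on $\phi^{(1)}$ and $\phi^{(2)}$, and the mild use of $m<1$ to collapse higher powers of $m$ into $m^3$.
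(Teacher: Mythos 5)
Your proposal is correct and follows essentially the same route as the paper: the same identity $\partial_b^r G=-\sum_k v_{kb}^r\,\phi^{(r)}(t_k)$ with $\phi(t)=t-\arctan t$, the bounds $|\phi'(t)|\leq t^2$, $|\phi''(t)|\leq 2|t|$ yielding $m^3|\xi|^2$ and $m^3|\xi|$, and absorption of the remaining powers of $|v_k|$ into $m^3$ for $r\geq 3$ using $m\leq 1$. The only cosmetic difference is that for $r\geq 3$ you invoke global boundedness of $\phi^{(r)}$ directly (giving the slightly cleaner constant bound $C_r m^3$), whereas the paper records the form $P_r(t)/(1+t^2)^{2^r}$ by induction; both arguments are equivalent in substance.
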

\begin{proof}
 This is a direct computation: 
	\[ -\d_b G = \sum_k \frac{d t_k}{d \xi_b}\cdot \frac{d}{dt_k}\left(t_k- \arctan t_k \right)	= \sum v_{kb}\left(1-\frac{1}{1+t_k^2}\right) = \sum v_{kb} \frac{t_k^2}{1+t_k^2},\]
	thus 
	\[\left|\d_b G\right|\leq \sum \left|v_k\right| \cdot t_k^2\leq  \sum \left|v_k\right|^3 |\xi|^2= m^3 |\xi|^2.
	\]
	Differentiating further, we get 
	\[-\d_b^2 G= \sum v_{kb}^2 \frac{d}{dt_k}\frac{1}{1+t_k^2} = \sum v_{kb}^2\frac{2t_k}{\left(1+t_k^2\right)^2},\]
	so 
	\[\left|\d_b^2 G\right|\leq \sum \left|v_k\right|^2 \left|t_k\right| \leq \sum \left|v_k\right|^3 |\xi|=m^3 |\xi|.\]
	It is easy to see by induction that $\left(\frac{d}{dt}\right)^r \left(t-\arctan t\right)=  \frac{P_r(t)}{\left(1+t^2\right)^{2^r}}$, so for $r\geq 3$,
	\[\left|\d_b^r G\right|= \left|\sum v_{kb}^r \left(\frac{d}{dt_k}\right)^r \left(t_k-\arctan t_k\right) \right|\]
	\[\leq \sum \left|v_{kb}\right|^r \left| \frac{P_r(t_k)}{\left(1+t_k^2\right)^{2^r}} \right|\leq \sum \left|v_k\right|^r P_r\left(|v_k|\cdot |\xi|\right)\leq m^3 P_r(|\xi|).\]
	
\end{proof}
 
\begin{lemma}\label{fhlemma}
	For any $r\geq 1$ and $ b\in \{1,2\}$, $\left|\d_b^r(F-H)\right|\leq m^3 P_r(|\xi|)$. 
	\end{lemma}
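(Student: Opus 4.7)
The plan is to mimic the strategy of Lemma \ref{glemma}, exploiting that $F-H$ is a sum $\frac12\sum g(t_k)$ where $g(t)=t^2-\ln(1+t^2)$ vanishes to fourth order at $0$, and then differentiate under the sum. Since $V^TV=I_2$ implies $\sum t_k^2=|\xi|^2$, we may write
\[H=-\tfrac12\sum t_k^2,\qquad F-H=\tfrac12\sum_k g(t_k),\qquad g(t):=t^2-\ln(1+t^2),\]
and using $\d_b t_k=v_{kb}$ one gets
\[\d_b^r(F-H)=\tfrac12\sum_k v_{kb}^{\,r}\, g^{(r)}(t_k).\]

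Next I would record pointwise bounds on the derivatives of $g$. A direct computation gives $g'(t)=\tfrac{2t^3}{1+t^2}$ and $g''(t)=\tfrac{2t^2(3+t^2)}{(1+t^2)^2}$, so $|g'(t)|\leq 2|t|^3$ and $|g''(t)|\leq 6t^2$. An easy induction, in the spirit of the rational-function induction used in Lemma \ref{glemma}, shows $g^{(r)}(t)=Q_r(t)/(1+t^2)^r$ for a polynomial $Q_r$, hence $|g^{(r)}(t)|\leq P_r(|t|)$ for a polynomial $P_r$.

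I would then split into three cases, using two standing facts: $|t_k|\leq |v_k|\cdot|\xi|$, and $|v_k|^3\leq\sum|v_k|^3=m^3$, hence $|v_k|\leq m\leq 1$ for $m$ small. For $r=1$,
\[|\d_b(F-H)|\leq\tfrac12\sum|v_k||g'(t_k)|\leq |\xi|^3\sum|v_k|^4\leq m\cdot m^3\,|\xi|^3\leq m^3 P_1(|\xi|).\]
For $r=2$, analogously, $|v_{kb}|^2|g''(t_k)|\leq 6|v_k|^4|\xi|^2$, giving the same $m^4|\xi|^2\leq m^3 P_2(|\xi|)$. For $r\geq 3$ the factor $|v_{kb}|^r$ carries enough smallness by itself: since $|v_{kb}|\leq|v_k|\leq m$ we factor $|v_{kb}|^r\leq |v_k|^3\cdot m^{r-3}$, and since $|v_k|\leq 1$ we have $P_r(|t_k|)\leq P_r(|\xi|)$, whence
\[\sum_k |v_{kb}|^r|g^{(r)}(t_k)|\leq m^{r-3}P_r(|\xi|)\sum_k|v_k|^3= m^r P_r(|\xi|)\leq m^3 P_r(|\xi|).\]

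The only nontrivial piece is the $r=1,2$ cases, where the raw factor $|v_{kb}|^r$ is not yet of order $|v_k|^3$; here one must spend vanishing of $g$ at $0$ (i.e. $|g'(t)|\lesssim|t|^3$, $|g''(t)|\lesssim t^2$) to convert two or one powers of $|t_k|\leq|v_k||\xi|$ into extra powers of $|v_k|$, upgrading $\sum|v_k|^2$ (which is only $O(1)$) into $\sum|v_k|^4\leq m\cdot m^3$. Everything else is bookkeeping.
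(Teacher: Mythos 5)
Your proof is correct and follows essentially the same route as the paper: both hinge on the fact that $F-H$ reduces to a sum of $v_{kb}^r$ times a rational function of $t_k$ that vanishes to fourth order at $0$, with the case split $r=1,2,\geq 3$ matching the paper exactly. The only cosmetic difference is that you write $F-H=\tfrac12\sum g(t_k)$ with $g(t)=t^2-\ln(1+t^2)$ upfront (using $\sum t_k^2=|\xi|^2$), whereas the paper differentiates $F$ and $H$ separately and then merges them via the identities $\sum v_{kb}t_k=\xi_b$ and $\sum v_{kb}^2=1$, arriving at the same expressions.
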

	
	\begin{proof}
	Obviously, $\d_b H= -\xi_b$, $\d^2 H=-1$, $\d^3 H=0$. It will be useful to remember that since $\sum x_k=0$ and $\sum x_k^2=1$,
	\[
	\sum n^{-1/2}t_k= n^{-1/2}\sum \left(\xi_1 x_k +\xi_2 n^{-1/2}\right)= \xi_2
	\]
	and
	\[
	\sum x_k t_k = \xi_1 \sum x_k^2 + \xi_2 n^{-1/2}\sum x_k = \xi_1,
	\]
	which can be compressed into 
	\[\sum_k v_{kb} t_k = \xi_b,\]
	which in turn was obvious from the fact that $\sum t_k v_k = \sum \ilsk {\xi}{v_k}v_k =V^TV\xi = \xi$.
 Moreover, for the same reasons,
	\[\sum_k v_{kb}^2=1.\]\par
        Therefore, for both $b\in\{1,2\}$,
	\[
	\d_b F= \d_b \sum -\frac{1}{2}\ln\left(1+t_k^2\right)= -\sum v_{kb}\frac{t_k}{1+t_k^2},
	\]
	\[
	\d_b^2 F= -\sum v_{kb}^2\frac{1+t_k^2-2t_k^2}{\left(1+t_k^2\right)^2}= -\sum v_{kb}^2\frac{1-t_k^2}{\left(1+t_k^2\right)^2},
	\]
	and for $r\geq 3$, 
	\[
	\d_b^r F=  \sum v_{kb}^r \frac{P_r(t_k)}{\left(1+t_k^2\right)^{2^r}}.
	\]
	We are now ready to estimate the differences.	
	\[
	\left|\d_b(F- H)\right|= \left|\xi_b-\sum v_{kb}\frac{t_k}{1+t_k^2}\right|= \left|\sum v_{kb}t_k-\sum v_{kb}\frac{t_k}{1+t_k^2}\right|
	\]
	\[
	=\left|\sum v_{kb}\frac{t_k^3}{1+t_k^2}\right|\leq \sum \left|v_k\right| |t_k|^3\leq \sum \left|v_k\right|^4\cdot |\xi|^3\leq  m^{3}|\xi|^3,
	\]
	\[
	\left|\d_b^2 (F-H)\right| = \left|1-\sum v_{kb}^2\frac{1-t_k^2}{\left(1+t_k^2\right)^2}\right|\]
	\[=\left|\sum v_{kb}^2-\sum v_{kb}^2 \frac{1-t_k^2}{\left(1+t_k^2\right)^2}\right|= \left|\sum v_{kb}^2 \frac{t_k^2-1+1+2t_k^2+t_k^4}{\left(1+t_k^2\right)^2}\right|
	\]
	\[
	= \left|\sum v_{kb}^2\frac{3t_k^2+t_k^4}{\left(1+t_k^2\right)^2} \right|\lesssim \sum \left|v_k\right|^2 \left(\left|t_k\right|^2 + \left|t_k\right|^4\right)\leq m^4 |\xi|^2 + m^6 |\xi|^4\leq m^3 P_2(|\xi|) ,
	\]
	and for $r\geq 3$,
	\[
	\left|\d_b^r (F - H)\right|= \left|\d_b^3 F\right|\leq \sum \left|v_k\right|^r P_r\left(\left|t_k\right|\right)\leq m^3 P_r(|\xi|).
	\]
	\end{proof}

\begin{theorem}\label{mainl1poly}
 For $b\in \{1,2\}$, $\ell,p\in \N_0$ and sufficiently small $m$,
 \[
 \iint_{\R^2}|\xi|^\ell \left|\d_b^p \left(e^{Z}-e^H\right)\right|d \xi\lesssim_{\ell,p} m^3.
 \]	
\end{theorem}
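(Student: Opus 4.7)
The plan is to reduce to Corollary \ref{ezminushpolyl1} (which handles $p=0$) together with Lemmas \ref{glemma} and \ref{fhlemma} by applying Fa\`a di Bruno's formula to the outer exponential. Concretely, I would write
\[
\d_b^p e^Z = e^Z\, B_p\!\left(\d_b Z,\d_b^2 Z,\ldots,\d_b^p Z\right), \qquad \d_b^p e^H = e^H\, B_p\!\left(\d_b H,\d_b^2 H,\ldots,\d_b^p H\right),
\]
where $B_p$ is the $p$-th complete Bell polynomial, and then split
\[
\d_b^p\!\left(e^Z-e^H\right) = \left(e^Z-e^H\right) B_p(\d Z) + e^H\!\left[B_p(\d Z)-B_p(\d H)\right].
\]

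For the first term, I note that Lemmas \ref{glemma} and \ref{fhlemma} give $|\d_b^r Z|\leq |\d_b^r H|+m^3 P_r(|\xi|)\leq Q_r(|\xi|)$ for $r=1,\ldots,p$, uniformly in small $m$, since $\d_b H=-\xi_b$, $\d_b^2 H=-1$, $\d_b^r H=0$ for $r\geq 3$. Because $B_p$ is a polynomial, $|B_p(\d Z)|\leq P(|\xi|)$ for a polynomial depending only on $p$. Corollary \ref{ezminushpolyl1}, applied with $\ell$ replaced by $\ell+\deg P$, then bounds the first term's contribution by $\lesssim_{\ell,p} m^3$.

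For the second term, I would expand the polynomial difference as
\[
B_p(a)-B_p(b) = \sum_{r=1}^p (a_r-b_r)\,R_r(a,b),
\]
with polynomial $R_r$ (obtained, e.g., from $B_p(a)-B_p(b)=\int_0^1 \nabla B_p(b+t(a-b))\cdot(a-b)\,dt$). Evaluating at $a_r=\d_b^r Z$, $b_r=\d_b^r H$, one again has $|R_r|\leq P(|\xi|)$ by the same uniform bounds, while Lemmas \ref{glemma} and \ref{fhlemma} yield $|a_r-b_r|\leq |\d_b^r(F-H)|+|\d_b^r G|\leq m^3 P_r(|\xi|)$. Hence $|B_p(\d Z)-B_p(\d H)|\leq m^3 P(|\xi|)$, and since $e^H=e^{-|\xi|^2/2}$ decays rapidly,
\[
\iint_{\R^2}|\xi|^\ell e^H\left|B_p(\d Z)-B_p(\d H)\right|d\xi \lesssim_{\ell,p} m^3 \iint_{\R^2} |\xi|^{\ell}P(|\xi|) e^{-|\xi|^2/2}d\xi \lesssim_{\ell,p} m^3.
\]

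The main obstacle is essentially bookkeeping: arranging the Fa\`a di Bruno expansion so that the polynomial bounds on the Bell polynomial factors are genuinely uniform in $m$ (and hence do not swallow the gain of $m^3$ coming from the factor $e^Z-e^H$ in the first term or from $\d_b^r(Z-H)$ in the second). No step should be conceptually difficult once the derivative bounds already established in Lemmas \ref{glemma} and \ref{fhlemma} are in hand.
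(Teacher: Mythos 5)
Your argument is correct, and it leans on exactly the same ingredients as the paper -- Corollary \ref{ezminushpolyl1} for the weighted $L^1$ smallness of $e^Z-e^H$, and Lemmas \ref{glemma}, \ref{fhlemma} for the pointwise bounds $\left|\d_b^r(Z-H)\right|\leq m^3P_r(|\xi|)$ -- but the algebraic organization differs. The paper factors $e^Z-e^H=e^H\left(e^{Z-H}-1\right)$, applies Leibniz, and then uses Fa\`a di Bruno only on $e^{Z-H}$, exploiting that \emph{every} derivative of the exponent $Z-H$ already carries the factor $m^3$; the price is that the resulting integrand contains $\left|e^{Z-H}\right|e^H=e^F$ rather than $e^H$, so one extra splitting $e^F=e^H+\left(e^F-e^H\right)$ (via Lemma \ref{tworeg}) is needed to see that $\iint P(|\xi|)e^F\,d\xi\lesssim 1$. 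Your decomposition $\d_b^p\left(e^Z-e^H\right)=\left(e^Z-e^H\right)B_p(\d Z)+e^H\left[B_p(\d Z)-B_p(\d H)\right]$ avoids that last step, since in your second term the weight is the genuine Gaussian $e^H$, while the $m^3$ gain comes from telescoping the Bell polynomials using $\left|\d_b^r Z-\d_b^r H\right|\leq\left|\d_b^r(F-H)\right|+\left|\d_b^r G\right|\leq m^3P_r(|\xi|)$; the uniform polynomial bounds $\left|\d_b^r Z\right|,\left|\d_b^r H\right|\leq Q_r(|\xi|)$ for small $m$ (using $\d_bH=-\xi_b$, $\d_b^2H=-1$, $\d_b^rH=0$ for $r\geq 3$) keep the Bell-polynomial factors harmless, and the first term is absorbed by Corollary \ref{ezminushpolyl1} with a larger power of $|\xi|$. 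So the two proofs trade a Leibniz-plus-resplitting step for a polynomial-difference (mean value) step; both are complete, and yours is marginally more self-contained at this stage since it does not re-invoke Lemma \ref{tworeg}.
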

	\begin{proof}
	By Leibniz's product rule,
	\[\d_b^p \left(e^{Z}-e^H\right)= \d_b^p e^H\left(e^{Z-H}-1\right)=\sum_{q=0}^p \binom{p}{q}\d_b^{p-q}e^H \d_b^q\left(e^{Z-H}-1\right) \]
	\[= \sum_{q=0}^p (-1)^{p-q}\binom{p}{q}\he{p-q}{\xi_b}e^H \d_b^q\left(e^{Z-H}-1\right).\]
	Thus
	\[\left|\d_b^p \left(e^{Z}-e^H\right)\right|\lesssim_p \left|\he{p}{\xi_b}e^H\left(e^{Z-H}-1\right)\right|+  \sum_{q=1}^p\left| \he{p-q}{\xi_b}e^H\d_b^q e^{Z-H}\right|.
	\]
	Our inequality for the first summand follows directly from Corollary \ref{ezminushpolyl1}:
	\[
	\iint_{\R^2}|\xi|^\ell \left|\he{p}{\xi_b}e^H\left(e^{Z-H}-1\right)\right| d\xi\leq \iint_{\R^2}P_{\ell,p}(|\xi|)\left|e^{Z}-e^H\right| d\xi \lesssim_{\ell,p} m^3.
	\]
	For the second one, we have to use the Fa\`{a} di Bruno's Formula:
 \[
\d^q \left(f\circ g\right)= \sum_{s_1+2s_2+\ldots+qs_q=q}\frac{q!}{\prod_{j=1}^q s_j! j!^{s_j}}\d^{s_1+\ldots+s_q}f \prod_{j=1}^q \left(\d^j g\right)^{s_j}.\]
  Ignoring the constants,
	\[
	\left|\d_b^q e^{Z-H}\right|\lesssim_q \sum_{s_1+2s_2+\ldots+qs_q=q}e^{Z-H}\prod_{j=1}^q\left(\d_b^j(Z-H)\right)^{s_j}.
	\]
	Since all derivatives are of order $\geq 1$, we may use Lemma \ref{glemma} + Lemma \ref{fhlemma}:
	\[
	\left|\d_b^q e^{Z-H}\right|\lesssim m^3 e^{Z-H} P_q(|\xi|)\]
	for $q\geq 1$. Ultimately,
	\[\iint_{\R^2} |\xi|^\ell\sum_{q=1}^p\left| \he{p-q}{\xi_b}e^H\d_b^q e^{Z-H}\right|d \xi
	\]
	\[
	\lesssim_p \iint_{\R^2} |\xi|^\ell \sum_q P_q(\xi) m^3 e^Z d\xi= m^3 \iint_{\R^2}P_{\ell,p}(|\xi|)e^Z d\xi \]
	\[= m^3 \iint_{\R^2}P_{\ell,p}(|\xi|)e^H d\xi + m^3 \iint_{\R^2}P_{\ell,p}(|\xi|)\left(e^Z-e^H\right) d\xi\lesssim_{\ell,p} m^3.
	\]
	\end{proof}

\section{Estimates of PDF for quotients}
 
We will make use of the following folklore lemma, expressing the probability density function (PDF)
of a quotient of random variables in terms of their joint PDF.
\begin{lemma} \label{pdfquotient}
	\[
	\mathrm{PDF}_{X_1/X_2}(s)=\int_\R |y|\mathrm{PDF}_{(X_1,X_2)}(sy,y)d y.
	\]

\end{lemma}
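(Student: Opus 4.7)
The plan is to apply the standard two-dimensional change of variables formula to the map $\Phi(x_1,x_2)=(x_1/x_2,\,x_2)$. Since $(X_1,X_2)$ admits a joint density, the event $\{X_2=0\}$ has probability zero and may be discarded; $\Phi$ then restricts to a smooth bijection from each of the open half-planes $\{x_2>0\}$ and $\{x_2<0\}$ onto $\R\times(0,\infty)$ and $\R\times(-\infty,0)$ respectively, with inverse $(s,y)\mapsto (sy,y)$. The Jacobian determinant of this inverse is
\[
\det\begin{pmatrix} y & s \\ 0 & 1 \end{pmatrix}=y,
\]
so the joint density of $(X_1/X_2,\,X_2)$ at $(s,y)$ equals $|y|\cdot\mathrm{PDF}_{(X_1,X_2)}(sy,y)$ on each half-plane. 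Integrating out $y$ over all of $\R$ (by Fubini) yields the marginal density of $X_1/X_2$, which is exactly the right-hand side of the claim.

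There is no substantive obstacle; the only care required is the use of $|y|$ (rather than $y$), so that the contributions from the two half-planes combine correctly. As an alternative derivation one can compute the cumulative distribution function directly: split the event $\{X_1/X_2\leq s\}$ according to the sign of $X_2$ (the inequality $x_1/x_2\leq s$ becomes $x_1\leq sx_2$ on $\{x_2>0\}$ and $x_1\geq sx_2$ on $\{x_2<0\}$), write each piece as an iterated integral, and differentiate in $s$ under the integral sign. The two boundary contributions are $\int_0^\infty y\,\mathrm{PDF}_{(X_1,X_2)}(sy,y)\,dy$ and $\int_{-\infty}^0 (-y)\,\mathrm{PDF}_{(X_1,X_2)}(sy,y)\,dy$, which sum to $\int_\R |y|\,\mathrm{PDF}_{(X_1,X_2)}(sy,y)\,dy$, confirming the formula.
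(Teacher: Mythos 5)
Your main argument is correct, but it takes a different route from the paper. You obtain the joint density of $(X_1/X_2,\,X_2)$ via the smooth change of variables $(s,y)\mapsto(sy,y)$ on the two half-planes $\{x_2>0\}$ and $\{x_2<0\}$ (discarding the null event $\{X_2=0\}$), and then marginalize in $y$; the absolute value $|y|$ comes out of the Jacobian, exactly as you say. The paper instead verifies the formula in the "integrated" direction: it integrates the proposed right-hand side over $s\in(-\infty,t)$, applies Fubini and the substitution $w=sy$ separately on $\{y>0\}$ and $\{y<0\}$, and recognizes the result as $\mathbb{P}\left(X_1/X_2<t\right)$, so the candidate expression is a density for $X_1/X_2$ by definition. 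The two proofs buy slightly different things: yours yields the stronger intermediate fact that $|y|\,\mathrm{PDF}_{(X_1,X_2)}(sy,y)$ is the joint density of the pair $(X_1/X_2,X_2)$, at the cost of invoking the multivariate change-of-variables theorem; the paper's computation uses only Fubini and a one-dimensional substitution and needs no discussion of diffeomorphisms. One caution about your alternative derivation: differentiating the CDF under the integral sign in $s$ is not automatic for an arbitrary joint density (it needs some domination of the difference quotients), which is precisely why the paper argues in the opposite direction, integrating the candidate density to recover the CDF rather than differentiating the CDF. Since that remark is only supplementary and your Jacobian argument stands on its own, there is no gap in your proposal.
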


\begin{proof} For any $t\in\R$,
\[ \int_{-\infty}^t \mathrm{RHS}(s)ds= \int_\R |y|\int_{\infty}^t \mathrm{PDF}_{(X_1,X_2)}(sy,y)dsdy
\]
\[=\int_0^\infty y\int_{-\infty}^t \mathrm{PDF}_{(X_1,X_2)}(sy,y)dsdy - \int_{-\infty}^0 y\int_{-\infty}^t \mathrm{PDF}_{(X_1,X_2)}(sy,y)dsdy
\]
\[
=\int_0^\infty\int_{-\infty}^{ty} \mathrm{PDF}_{(X_1,X_2)}(w,y)dwdy + \int_{-\infty}^0\int_{ty}^\infty \mathrm{PDF}_{(X_1,X_2)}(w,y)dwdy
\]
\[
=\iint_{\left\{(w,y): y>0\& w<ty\text{ or } y<0 \& w>ty\right\}}\mathrm{PDF}_{(X_1,X_2)}(w,y)dwdy\]
\[= \iint_{\left\{(w,y): \frac{w}{y}<t\right\}}\mathrm{PDF}_{(X_1,X_2)}(w,y)dwdy= \mathbb{P}\left(\frac{X_1}{X_2}<t\right)= \int_{-\infty}^t \mathrm{PDF}_{X_1/X_2}(s) ds.
\]

\end{proof}

\begin{theorem}\label{LLTvec}
	Let $P_1,\ldots,P_n$ be independent and $\mathrm{Exp}(1)$-distributed. Define
	\[Q=\left(Q_1,Q_2\right)=\left( \sum x_k(P_k-1), n^{-1/2}\sum (P_k-1)\right)\]
	 and let $N=(N_1,N_2)$ be $\mathrm{N}(0,I_2)$-distributed. Then for any $p,q,r\in\N_0$ and $s_1,s_2\in \R $,
 	\[	\left(\left|s_1\right|^r+\left|s_2\right|^r\right)\left|\d_{s_1}^p\d_{s_2}^q \left(\mathrm{PDF}_Q-\mathrm{PDF}_N\right)(s_1,s_2)\right|\lesssim_{p,q,r} m^3.\] 
\end{theorem}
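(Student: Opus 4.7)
The argument goes through Fourier inversion and reduces the statement to the $L^1$ estimate of Section 3. A direct computation using $\ilsk{\xi}{Q}=\sum_k t_k(P_k-1)$ and the characteristic function $s\mapsto e^{-is}/(1-is)$ of $\mathrm{Exp}(1)-1$ gives
\[\varphi_Q(\xi)=\prod_{k=1}^n\frac{e^{-it_k}}{1-it_k}=\prod_{k=1}^n\frac{1}{\sqrt{1+t_k^2}}\,e^{-i(t_k-\arctan t_k)}=e^{F+iG}=e^Z,\]
while $\varphi_N(\xi)=e^H$. By Lemma \ref{khintail}, $|\xi|^\ell e^Z$ and $|\xi|^\ell e^H$ lie in $L^1(\R^2)$ for every $\ell\in\N_0$, so Fourier inversion holds and one may differentiate under the integral sign:
\[\d_{s_1}^p\d_{s_2}^q\p{\mathrm{PDF}_Q-\mathrm{PDF}_N}(s)=\frac{1}{(2\pi)^2}\iint_{\R^2}e^{-i\ilsk{s}{\xi}}(-i\xi_1)^p(-i\xi_2)^q\p{e^Z-e^H}d\xi.\]

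To produce the multiplier $s_b^r$, I would iterate the identity $s_b e^{-i\ilsk{s}{\xi}}=i\,\d_{\xi_b}e^{-i\ilsk{s}{\xi}}$ and integrate by parts $r$ times in $\xi_b$. The boundary terms vanish because each $\d_{\xi_b}^k\bigl(\xi^\alpha(e^Z-e^H)\bigr)$ decays rapidly at infinity, which follows from Lemma \ref{khintail} combined with the polynomial-growth estimates of Lemmas \ref{glemma} and \ref{fhlemma} for the derivatives of $Z-H$. The resulting identity is
\[s_b^r\,\d_{s_1}^p\d_{s_2}^q\p{\mathrm{PDF}_Q-\mathrm{PDF}_N}(s)=\frac{(-i)^r}{(2\pi)^2}\iint_{\R^2}e^{-i\ilsk{s}{\xi}}\,\d_{\xi_b}^r\!\left[(-i\xi_1)^p(-i\xi_2)^q(e^Z-e^H)\right]d\xi.\]
Leibniz's rule expands the bracketed expression into a finite sum of terms of the form $P(\xi)\cdot \d_{\xi_b}^k(e^Z-e^H)$ with $k\leq r$ and $\deg P$ bounded in terms of $p,q,r$. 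Passing the absolute value inside the integral and applying Theorem \ref{mainl1poly} to each summand bounds each piece by a $(p,q,r)$-dependent constant times $m^3$. Doing this for both $b\in\{1,2\}$ controls the quantity $|s_1|^r+|s_2|^r$.

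The main obstacle is not really in this step — the substantive work lives in Theorem \ref{mainl1poly} and its supporting lemmas, which have already been established. What remains is purely bookkeeping: verifying the decay needed to differentiate under the integral sign and to kill boundary terms, and then unpacking the Leibniz expansion to see that it falls precisely into the form handled by Theorem \ref{mainl1poly}. The present theorem is thus best viewed as the direct payoff of the Fourier-side estimates proved in Section 3.
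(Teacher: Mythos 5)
Your proposal is correct and follows essentially the same route as the paper: compute $\varphi_Q=e^{F+iG}$, invert the Fourier transform, convert the weight $s_b^r$ into $r$ integrations by parts in $\xi_b$, expand by Leibniz, and bound each resulting term via Theorem \ref{mainl1poly}. The only difference is cosmetic bookkeeping (you even make explicit the decay justification for differentiating under the integral and discarding boundary terms, which the paper leaves implicit).
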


\begin{proof}
	Firstly, we calculate the characteristic function of $$Q= \sum (x_k,n^{-1/2})(P_k-1).$$ Since 
	\[\varphi_{\mathrm{Exp}(1)-1}(t)= \E e^{it(P_1-1)}= \int_0^\infty e^{it(p-1)}\cdot e^{-p}dp= e^{-it}\int_0^\infty e^{-p(1-it)}dp =\frac{e^{-it}}{1-it},\]
	we have
	\[
	\varphi_Q(\xi)= \E e^{i\ilsk{\xi }{Q}}= \E e^{i \sum \ilsk{\xi}{v_k}(P_k-1)}=\prod \E e^{it_k (P_k-1)}= \prod \frac{e^{-it_k}}{1-it_k}
	\]
	\[
	=\prod \frac{e^{-it_k}}{\left|1-it_k\right|\cdot e^{-i\arctan t_k}}=\prod \frac{e^{-i(t_k-\arctan t_k)}}{\sqrt{1+t_k^2}}
	\]
	\[= e^{-\frac{1}{2}\sum\ln\left(1+t_k^2\right)-i\sum(t_k-\arctan t_k)}= e^{F+iG}.
	\]
	Therefore, by Fourier inversion formula,
	\[
	(2\pi)^2\left|s_1^r\partial_{s_1}^p\partial_{s_2}^q\left(\mathrm{PDF}_Q-\mathrm{PDF}_N\right)\left(s_1,s_2\right)\right|= \left|s_1^r\partial_{s_1}^p\partial_{s_2}^q\iint_{\R^2}e^{-i\ilsk{s}{\xi}}\left(\varphi_Q-\varphi_N\right)(\xi)d \xi\right|
	\]
	\[
	=\left|s_1^r\iint_{\R^2} \left(\varphi_Q-\varphi_N\right)(\xi)\partial_{s_1}^p\partial_{s_2}^q e^{-i\ilsk{s}{\xi}}d \xi\right| = 
	\left| s_1^r\iint_{\R^2}  \left(\varphi_Q-\varphi_N\right)(\xi) \xi_1^p\xi_2^q e^{-i\ilsk{s}{\xi}} d \xi \right|
\]          
	\[= \left| \iint_{\R^2}  \left(\varphi_Q-\varphi_N\right)(\xi) \xi_1^p\xi_2^q \partial_{\xi_1}^r e^{-i\ilsk{s}{\xi}} d \xi \right|
= \left| \iint_{\R^2}  e^{-i\ilsk{s}{\xi}} \partial_{\xi_1}^r  \left( \left(\varphi_Q-\varphi_N\right)(\xi) \xi_1^p\xi_2^q \right) d \xi \right|
	\]
 \[
=\left| \iint_{\R^2}  e^{-i\ilsk{s}{\xi}} \sum_{j=0}^r\binom{r}{j}  \left( \d_{\xi_1}^{r-j} \left(\varphi_Q-\varphi_N\right)(\xi) \right)\left( \d_{\xi_1}^j\xi_1^p\xi_2^q \right) d \xi \right|
 \]
	\[\lesssim_{p,r} \sum_{j=0}^{\min(r,p)} \iint_{\R^2}  \left| \xi_2^q \xi_1^{p-j} \partial_{\xi_1}^{r-j}\left(\varphi_Q-\varphi_N\right)(\xi) \right|d \xi \]
\[
\leq \sum_{j=0}^{\min(r,p)} \iint_{\R^2}  |\xi|^{p+q-j}\left|\partial_{\xi_1}^{r-j}\left(\varphi_Q-\varphi_N\right)(\xi) \right|d \xi 
\lesssim_{p,q,r} m^3\]
by Theorem \ref{mainl1poly}. The identical calculation is valid for multiplying by $s_2^r$. 
\end{proof}

\begin{theorem} \label{quotqcloseton}
Under the assumptions of Theorem \ref{LLTvec},
\[\left|s^p\partial_s^q\left(\mathrm{PDF}_{\frac{Q_1}{1+n^{-1/2}Q_2}}-\mathrm{PDF}_{\frac{N_1}{1+n^{-1/2}N_2}}\right)(s) \right| \lesssim_{p,q} m^3\]  for $p,q\in \N_0$ and $|s|\leq n$.
\end{theorem}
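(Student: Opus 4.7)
The plan is to reduce the problem to Theorem \ref{LLTvec} via Lemma \ref{pdfquotient}, applied with $X_1=Q_1$, $X_2=1+n^{-1/2}Q_2$ (and analogously for $N$). The affine change of variable $q_2\mapsto 1+n^{-1/2}q_2$ contributes a Jacobian of $\sqrt n$, so Lemma \ref{pdfquotient} gives
\[\mathrm{PDF}_{Q_1/(1+n^{-1/2}Q_2)}(s)=\sqrt n\int_\R |y|\,\mathrm{PDF}_Q(sy,\sqrt n(y-1))\,dy,\]
and analogously for $N$. Subtracting and moving $s^p\partial_s^q$ through the integral (each derivative in $s$ costs a factor $y$ and leaves $\partial_1^q$ on the joint density) yields
\[s^p\partial_s^q(\Delta\mathrm{PDF})(s)=\sqrt n\int_\R s^p|y|y^q\,\partial_1^q(\mathrm{PDF}_Q-\mathrm{PDF}_N)(sy,\sqrt n(y-1))\,dy,\]
where $\Delta\mathrm{PDF}=\mathrm{PDF}_{Q_1/(1+n^{-1/2}Q_2)}-\mathrm{PDF}_{N_1/(1+n^{-1/2}N_2)}$.

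Theorem \ref{LLTvec} supplies, for every $r\in\N_0$, the pointwise estimate
\[|\partial_1^q(\mathrm{PDF}_Q-\mathrm{PDF}_N)(u_1,u_2)|\lesssim_{q,r} \frac{m^3}{1+|u_1|^r+|u_2|^r}.\]
I would pick $r$ large (a threshold depending on $p$ and $q$) and split the $y$-integral into the bulk $\tfrac12\leq|y|\leq 2$, the inner region $|y|<\tfrac12$, and the tail $|y|>2$. In the bulk, $|y|^{q+1}$ is bounded, the substitution $t=\sqrt n(y-1)$ cancels the prefactor $\sqrt n$, and the elementary inequality $\int\frac{dt}{1+|s|^r+|t|^r}\lesssim (1+|s|)^{1-r}$ makes the contribution $\lesssim m^3|s|^p(1+|s|)^{1-r}\lesssim m^3$ as soon as $r\geq p+1$. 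In the tail, $|y-1|\geq |y|/2$ and $|sy|^r=|s|^r|y|^r$ force the denominator to be $\gtrsim |y|^r\max(|s|^r,n^{r/2})$; integrability of $|y|^{q+1-r}$ (valid for $r>q+2$) combined with the bound $\sqrt n|s|^p/\max(|s|^r,n^{r/2})\lesssim n^{(p+1-r)/2}$ (which uses $|s|\leq n$ in the regime $|s|>\sqrt n$) closes this case for $r\geq p+1$.

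The delicate region is the inner one. There $|y-1|\geq \tfrac12$ makes the denominator at least $n^{r/2}/2^r$, and $\int_{|y|<1/2}|y|^{q+1}\,dy\lesssim 1$, so the contribution is bounded by $m^3\sqrt n\cdot n^{-r/2}|s|^p$; the hypothesis $|s|\leq n$ upgrades this to $m^3 n^{p+1/2-r/2}$, which is $\lesssim m^3$ provided $r\geq 2p+1$. This is the only region where $|s|\leq n$ is genuinely used, and it is the main obstacle of the argument: one has to choose a single $r$ depending on $p,q$ so that the smallness $n^{-r/2}$ of the joint density off the diagonal, aided by the prefactor $\sqrt n$, absorbs the full polynomial $|s|^p$ allowed by the hypothesis while all three $y$-regions remain integrable. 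Taking e.g.\ $r=\max(2p+1,q+3)$ accomplishes this uniformly and concludes the proof.
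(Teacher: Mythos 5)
Your proof is correct and follows the same core strategy as the paper: reduce to the joint density via Lemma \ref{pdfquotient}, invoke Theorem \ref{LLTvec} in the form $\left|\partial_1^q(\mathrm{PDF}_Q-\mathrm{PDF}_N)(u_1,u_2)\right|\lesssim m^3/(1+|u_1|^r+|u_2|^r)$, and then show the resulting $y$-integral is bounded uniformly for $|s|\leq n$. The differences are in the bookkeeping and are worth noting. The paper first splits on $s\lessgtr 2$ and, for $s>2$, splits $y$ at $1/2$, using two independent decay exponents $A$ (for the first coordinate) and $B$ (for the second), ultimately taking $A=2(q+3)$ and $B\gtrsim p$; in the delicate region $s>2$, $|y|\leq\tfrac12$ the paper keeps both $|sy|^A$ and $n^{B/2}$ in the denominator and runs a two-step substitution to balance the exponents. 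You instead split purely in $y$ (bulk $\tfrac12\leq|y|\leq 2$, tail $|y|>2$, inner $|y|<\tfrac12$), use a single exponent $r$, and dispatch the inner region by the cruder step of dropping the $|sy|^r$ term and using $|s|\leq n$ directly; this costs a somewhat larger threshold ($r\geq 2p+1$) but avoids the substitution computation entirely. Both treatments are correct, and yours is arguably the more elementary in the delicate case. One small misstatement: in your tail region you claim that $|s|\leq n$ is used when $|s|>\sqrt n$, but for $r>p$ the quantity $\sqrt n\,|s|^{p-r}$ is decreasing in $|s|$, so its supremum over $|s|>\sqrt n$ is already $n^{(p+1-r)/2}$ with no upper bound on $s$ needed; $|s|\leq n$ is genuinely used only in the inner region, as you correctly emphasize at the end.
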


\begin{proof}
	By Lemma \ref{pdfquotient},
	\[
	\left|s^p\partial_s^q\left(\mathrm{PDF}_{\frac{Q_1}{1+n^{-1/2}Q_2}}-\mathrm{PDF}_{\frac{N_1}{1+n^{-1/2}N_2}}\right)(s)\right|\]

\[= \left| s^p\partial_s^q\int_\R |y| \left(\mathrm{PDF}_{(Q_1,1+n^{-1/2}Q_2)}-\mathrm{PDF}_{(N_1,1+n^{-1/2}N_2)}\right)(sy,y)d y\right|	\]
	
\[
	=\left|s^p\partial_s^q \int_\R |y| n^{1/2}\left(\mathrm{PDF}_{(Q_1,Q_2)}-\mathrm{PDF}_{(N_1,N_2)}\right)(sy,n^{1/2}(y-1))d y\right|\]

\[=\left|s^p \int_\R |y| n^{1/2}y^q \left[\partial^{(q,0)}\left(\mathrm{PDF}_{(Q_1,Q_2)}-\mathrm{PDF}_{(N_1,N_2)}\right)\right](sy,n^{1/2}(y-1))d y\right|
\]

\[\label{uglyint1}\lesssim_{A,B} m^3\int_\R \frac{\left|s^p y^{q+1}n^{1/2}\right|}{1+|sy|^A+\left|n^{1/2}(y-1)\right|^B} d y,
\]
where the last inequality is an application of Theorem \ref{LLTvec} for $\partial_{s_1}^q$ and $r\in\{0,A,B\}$. \par
Now we have to choose the appropriate $A,B$ to prove that the integral is bounded by a constant that depends only on $p,q$. Without loss of generality, $s>0$. \par 
For $s\leq 2$, by substituting $z=n^{1/2}(y-1)$, 
\[ \int_\R \frac{\left|s^p y^{q+1}n^{1/2}\right|}{1+|sy|^A+\left|n^{1/2}(y-1)\right|^B} dy\lesssim_{p} \int_\R \frac{|y|^{q+1}n^{1/2}}{1+\left|n^{1/2}(y-1)\right|^B}dy \]
\[
=\int_\R \frac{\left|1+n^{-1/2}z\right|^{q+1}}{1+\left|z\right|^B}dz \lesssim_q \int_\R \frac{(1+|z|)^{q+1}}{1+|z|^B}\lesssim_q 1
\]
provided that $B\geq q+3$. \par
From now on, $s>2$. We will split the intergal in two parts: $|y|\leq \frac{1}{2}$ and $|y|>\frac{1}{2}$. For any $t_1,t_2\geq 0$ and $A\geq p$, $B\geq 2(q+3)$,
\[t_1^p\left(1+t_2^{q+3}\right)=t_1^p + t_1^p t_2^{q+3}\lesssim t_1^p+t_1^{2p} + t_2^{2(q+3)}\lesssim 1+t_1^A+t_2^B.\]
Therefore,
\[
\int_{\R-\left[-\frac{1}{2},\frac{1}{2}\right]} \frac{\left|s^p y^{q+1}n^{1/2}\right|}{1+|sy|^A+\left|n^{1/2}(y-1)\right|^B} dy 
\lesssim \int_{\R-\left[-\frac{1}{2},\frac{1}{2}\right]} \frac{\left|s^p y^{q+1}n^{1/2}\right|}{|sy|^A\left(1+\left|n^{1/2}(y-1)\right|^B\right)} dy 
\]
\[
\leq \int_{\R-\left[-\frac{1}{2},\frac{1}{2}\right]} \frac{\left|s^p y^{q+1}n^{1/2}\right|}{(s/2)^A\left(1+\left|n^{1/2}(y-1)\right|^B\right)} dy \leq 
2^A s^{p-A}\int_\R \frac{|y|^{q+1}n^{1/2}}{1+\left|n^{1/2}(y-1)\right|^B}dy \lesssim_{q,A} 1
\]
by the same calculation as for $s\leq 2$, provided that $A\geq p$ and $B\geq 2(q+3)$. \par 
For $|y|\leq \frac{1}{2}$, we have $|1-y|\geq \frac{1}{2}$. Moreover, the integral in question is increasing in $p$ due to $s>1$, so without loss of generality $p\geq q+2$. Thus
\[ \int_{\left[-\frac{1}{2},\frac{1}{2}\right]} \frac{\left|s^p y^{q+1}n^{1/2}\right|}{1+|sy|^A+\left|n^{1/2}(y-1)\right|^B} dy \eqsim_B \int_{\left[-\frac{1}{2},\frac{1}{2}\right]} \frac{\left|s^p y^{q+1}n^{1/2}\right|}{|sy|^A+ n^{B/2}} dy 
\]
\[=2n^{1/2} s^{p-q-1} \int_0^\frac{1}{2} \frac{(sy)^{q+1}}{|sy|^A+ n^{B/2}} dy
\eqsim n^{1/2} s^{p-q-2} \int_0^\frac{s}{2} \frac{z^{q+1}}{z^A+ n^{B/2}} dz\]
\[ =n^{1/2} s^{p-q-2} \int_0^\frac{s/2}{n^{\frac{B}{2A}}w} \frac{\left(n^{\frac{B}{2A}}w\right)^{q+1}}{\left(n^{\frac{B}{2A}}\right)^A+ n^{B/2}} dn^{\frac{B}{2A}}w
\]
\[=n^{1/2}s^{p-q-2}\int_0^{\frac{1}{2}s n^{-\frac{B}{2A}}} \frac{n^{\frac{(q+1)B}{2A}}w^{q+1}}{n^{B/2}\left(1+w^A\right)}dw\]
\[=n^{\frac{1}{2}+\frac{B}{2}\left(\frac{q+1}{A}-1\right)} s^{p-q-2} \int_0^{\frac{1}{2}s n^{-\frac{B}{2A}}} \frac{w^{q+1}}{1+w^A}dw \]
\[
\leq n^{\frac{1}{2}+\frac{B}{2}\left(\frac{q+1}{A}-1\right)+p-q-2} \int_0^\infty \frac{w^{q+1}}{1+w^A}dw. 
\]
This is made bounded e.g. by taking first $A=2(q+3)$ and then $B\geq 4p$. 
\end{proof}

\begin{theorem} \label{puregauss} The inequality 
	\[\left|t^p\partial_t^q\left(\mathrm{PDF}_{N_1}-\mathrm{PDF}_{\frac{N_1}{1+n^{-1/2}N_2}}\right)(t) \right| \lesssim_{p,q} n^{-1/2}\]
	holds\footnote{Probably one can take a better exponent on the RHS, but
 other bounds are already not better than $n^{-1/2}$.} for $p,q\in \N_0$ and $|t|\leq n$.  
\end{theorem}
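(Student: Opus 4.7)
The strategy is to write $\mathrm{PDF}_{N_1/(1+\epsilon N_2)}$ (with $\epsilon := n^{-1/2}$) as an expectation over $N_2$, and then compare with $\mathrm{PDF}_{N_1}(t)$ via a second-order Taylor expansion in $W := 1+\epsilon N_2$ around $W=1$, exploiting $\E N_2 = 0$ to annihilate the first-order contribution. Combining Lemma~\ref{pdfquotient}, the independence of $N_1$ and $N_2$, and the substitution $y = 1+\epsilon u$ in the $y$-integral yields
\[
\mathrm{PDF}_{N_1/(1+\epsilon N_2)}(t) = \frac{1}{\sqrt{2\pi}}\,\E\!\left[ |1+\epsilon N_2|\, e^{-t^2(1+\epsilon N_2)^2/2} \right],
\]
while $\mathrm{PDF}_{N_1}(t) = \frac{1}{\sqrt{2\pi}}\,e^{-t^2/2}$. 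Set
\[
\Phi(t,w) := t^p\,\d_t^q\!\left[w\, e^{-t^2 w^2/2}\right] = (-1)^q\, t^p w^{q+1}\,\he{q}{tw}\, e^{-(tw)^2/2},
\]
a function smooth in $w\in\R$. A direct sign check shows $t^p\,\d_t^q\!\left[|w|\,e^{-t^2 w^2/2}\right] = \sgn(w)\,\Phi(t,w)$, so after interchanging $\d_t^q$ with the expectation (routine, by dominated convergence against the Gaussian weight in $N_2$) one obtains
\[
t^p\,\d_t^q\!\left(\mathrm{PDF}_{N_1}(t) - \mathrm{PDF}_{N_1/(1+\epsilon N_2)}(t)\right) = \frac{1}{\sqrt{2\pi}}\,\E\!\left[\Phi(t,1) - \sgn(W)\,\Phi(t,W)\right].
\]

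I would then split the expectation according to a central region $\{|N_2|\leq\sqrt n/2\}$ (on which $W\in[\tfrac12,\tfrac32]$, so $\sgn(W)=1$) and its tail. On the tail, $|\Phi(t,W)|$ is bounded by a polynomial in $(t,W)$ (using $|\he{q}{x}|e^{-x^2/2}\leq C_q$); for $|t|\leq n$ and $|W|\leq 1+\epsilon|N_2|$, integration against $e^{-N_2^2/2}$ over $|N_2|>\sqrt n/2$ is at most $\mathrm{poly}(n)\cdot e^{-n/8}$, which is $o(n^{-1/2})$ with much room to spare. On the central region, Taylor-expand
\[
\Phi(t,W) = \Phi(t,1) + \epsilon N_2\,\d_w\Phi(t,1) + \int_0^{\epsilon N_2}(\epsilon N_2 - s)\,\d_w^2\Phi(t,1+s)\,ds.
\]
The first-order piece, upon integrating against the truncated Gaussian, equals $\epsilon\,\d_w\Phi(t,1)\cdot\E[N_2\,\mathbbm{1}_{\{|N_2|\leq\sqrt n/2\}}] = -\epsilon\,\d_w\Phi(t,1)\cdot\E[N_2\,\mathbbm{1}_{\{|N_2|>\sqrt n/2\}}]$, which is exponentially small in $n$ since $\d_w\Phi(t,1)$ is a polynomial in $t$ times $e^{-t^2/2}$, hence uniformly bounded in $t$. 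For the remainder: on $|s|\leq 1/2$ one has $1+s\in[\tfrac12,\tfrac32]$, so $\d_w^2\Phi(t,1+s)$ is a polynomial in $(t,s)$ times $e^{-(t(1+s))^2/2}\leq e^{-t^2/8}$, hence uniformly bounded by some $C_{p,q}$. Pointwise the remainder is therefore at most $\tfrac12(\epsilon N_2)^2 C_{p,q}$, and in expectation at most $C_{p,q}\epsilon^2 = C_{p,q}/n$, giving a total of $O_{p,q}(n^{-1})$ — stronger than the required $O(n^{-1/2})$.

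The main obstacle is bookkeeping rather than a deep idea: one must confine the non-smoothness of $w\mapsto |w|$ at $w=0$ to the exponentially small tail region $\{|N_2|>\sqrt n/2\}$, and ensure that every polynomial factor of $t$ gets absorbed by the Gaussian factor $e^{-(tW)^2/2}$, which requires $W$ to stay bounded away from $0$ — hence the central-region truncation. The hypothesis $|t|\leq n$ enters only in the tail estimates, where a polynomial in $n$ is easily beaten by $e^{-n/8}$.
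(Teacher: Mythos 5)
Your proposal is correct, and it proves a sharper bound than the paper does; both arguments start from Lemma~\ref{pdfquotient} and the change of variable $y=1+n^{-1/2}u$, so the underlying setup is the same, but you part ways after that. The paper writes $\d_t^q\mathrm{PDF}_{N_1/(1+n^{-1/2}N_2)}(t)=\int_\R n^{1/2}|y|y^qh_q(yt)\rho(n^{1/2}(y-1))\,dy$ and then splits the error against $h_q(t)$ into two pieces: a \emph{weight error} (replacing $|y|y^q$ by $1$, estimate \eqref{michbound}) and an \emph{argument error} (replacing $h_q(yt)$ by $h_q(t)$, estimate \eqref{ichbound}); each is controlled by a first-order estimate (in the second case a Mean Value Theorem bound), yielding $O(n^{-1/2})$. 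You instead fold everything into a single smooth function $\Phi(t,w)=t^p\d_t^q[w\,e^{-t^2w^2/2}]$ and perform one second-order Taylor expansion of $w\mapsto\Phi(t,w)$ about $w=1$; because $\E N_2=0$ (in fact by symmetry even after truncation), the linear term vanishes and the quadratic remainder, uniformly bounded on $w\in[1/2,3/2]$ by the Gaussian factor $e^{-(tw)^2/2}\le e^{-t^2/8}$, contributes $O(\E(\epsilon N_2)^2)=O(n^{-1})$, with the tail $\{|N_2|>\sqrt n/2\}$ and the sign issue at $w=0$ exponentially negligible thanks to $|t|\le n$. This buys you exactly the improvement the paper's own footnote conjectures is possible, via a cleaner single-expansion bookkeeping rather than two separate first-order estimates; the downside is purely cosmetic, namely that the exchange of $\d_t^q$ with $\E$ and the small computation of $\d_w^2\Phi$ need to be stated, which you correctly note are routine.
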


\begin{proof}
This is yet another application of Lemma \ref{pdfquotient}. Denote $\rho=\mathrm{PDF}_{N_1}$ and $h_k(t)=\d_t^k \rho(t)= (2\pi)^{-1/2}(-1)^k \he{k}{t} e^{-t^2/2}$. We have
\[
\d_t^q\mathrm{PDF}_{\frac{N_1}{1+n^{-1/2}N_2}}(t)= \d_t^q \int_\R |y|\mathrm{PDF}_{\left(N_1,1+n^{-1/2}N_2\right)}(yt,y)dy
\]
\[
=\d_t^q \int_\R n^{1/2}|y|\mathrm{PDF}_{\left(N_1,N_2\right)}(yt,n^{1/2}(y-1))dy= \d_t^q \int_\R n^{-1/2}|y|(yt)\left(n^{-1/2}(y-1) \right)dy
\]

\[
=\int_\R n^{1/2}|y|y^q h_q(yt)\left(n^{1/2}(y-1) \right)dy.
\]
This is supposed to be $\lesssim t^{-p}n^{-1/2}$-close to $h_q(t)$, which is to be expected, since $n^{1/2}\left(n^{1/2}(y-1) \right)$ is an approximation kernel around $y=1$. We will prove
\begin{equation}\left|\int_\R n^{1/2}\left(|y|y^q-1\right) h_q(yt)\left(n^{1/2}(y-1) \right)dy\right|\lesssim_{p,q} t^{-p}n^{-1/2} \label{michbound}
\end{equation}
and
\begin{equation}\label{ichbound} \left|\int_\R \left(h_q(t)-h_q(yt)\right)n^{1/2}\left(n^{-1/2}(y-1)\right)dy \right|\lesssim_{p,q} t^{-p}n^{-1/2}.
	\end{equation}
	Without loss of generality, $t>0$, because $h_q$ is either an odd or an even function. \par 
	In order to prove \eqref{michbound}, we notice that 
	\[
	\left||y|y^q-1\right|\simeq_q \begin{cases} 1+|y|^{q+1}\simeq_q |y-1|^{q+1}\text{ for }y<0\\ |y-1| \text{ for } y\in (0,1)\\ |y-1|^{q+1}\text{ for }y>1.\end{cases}
	\]
	Thus, the first integral is bounded by
	\[
	\int_\R n^{1/2}\left(|y-1|+|y-1|^{q+1}\right)h_q(yt)\left(n^{1/2}(y-1) \right)dy
	\]
	\[
	=\int_\R \left(|n^{-1/2}z|+|n^{-1/2}z|^{q+1}\right)h_q\left(t\left(1+n^{-1/2}z\right)\right)(z) dz
	\]
	\[
	\leq n^{-1/2}\int_\R P_q(z) h_q\left(t\left(1+n^{-1/2}z\right)\right)(z) dz\] \[\leq n^{-1/2}\tilde{P}_q(t)\int_\R P_q(z)\left(t\left(1+n^{-1/2}z\right)\right)(z)dz.
	\]
 because $h_q(yt)=P_q(yt)\rho(yt)$. 
	Since we have to prove the inequality for arbitrary $p$ anyway, it is enough to show
\[\int_\R |z|^A \left(t\left(1+n^{-1/2}z\right) \right)\rho\left(z\right)dz\lesssim_{A,p} t^{-p}\]
for any $A,p\in \N_0$. We may assume $t\gtrsim 1$, because otherwise we simply bound $\rho\left(t\left(1+n^{-1/2}z\right) \right)$ by a constant and RHS is $\gtrsim 1$. For a fixed $A$, the function $\left(\frac{1}{2}n^{1/2},\infty \right)\ni w\mapsto w^A \rho(w)$ is decreasing, so for  $z<-\frac{1}{2}n^{1/2}$, 
\[n^{1/2} t^{p} |z|^A \rho(z)\lesssim n^{p+\frac{1}{2}} n^{A/2} e^{-n/8}\lesssim_{A,p} 1.\]
Therefore 
\[
\int_{-\infty}^{-\frac{1}{2}n^{1/2}} |z|^A \rho\left(t\left(1+n^{-1/2}z\right) \right)\rho\left(z\right)dz \lesssim_{A,p}
n^{-1/2}t^{-p}\int_{-\infty}^{-\frac{1}{2}n^{1/2}} \rho\left(t\left(1+n^{-1/2}z\right) \right)dz 
\]
\[
=n^{-1/2}t^{-p}\int_{-\infty}^{\frac{1}{2}}\rho(ty)n^{1/2}dy\leq  t^{-p} \int_\R \rho(ty)dy =t^{-p-1}\lesssim t^{-p}
\]
as intended. One the other hand, if $z>-\frac{1}{2}n^{1/2}$ then $1+n^{-1/2}z>\frac{1}{2}$, so 
\[
t^p \int_{-\frac{1}{2}n^{1/2}}^\infty |z|^A \rho\left(t\left(1+n^{-1/2}z\right) \right)\rho\left(z\right)dz
\leq t^p \int_{-\frac{1}{2}n^{1/2}}^\infty |z|^A \rho\left(t/2\right)\rho\left(z\right)dz
\]
\[\leq t^{p}\rho(t/2)\int_\R |z|^A\rho(z)dz\lesssim_{A,p} 1\] 
which concludes the proof of \eqref{michbound}.\par 
For \eqref{ichbound}, we use a brutal Mean Value Theorem bound, minding that $[yt,t]$ denotes $\mathrm{conv}\{yt,t\}$ even if $yt>t$. 
\[
n^{1/2}t^p\left|\int_\R \left(h_q(t)-h_q(yt)\right)n^{1/2}\rho\left(n^{-1/2}(y-1)\right)dy \right|
\]
\[
\leq n^{1/2}t^p\int_\R \left|h_q(t)-h_q(yt)\right|n^{1/2}\rho\left(n^{-1/2}(y-1)\right)dy
\]
\[
\leq n^{1/2}t^p\int_\R t\left|y-1\right| \sup_{[yt,t]}\left|h_{q+1}\right|n^{1/2}\rho\left(n^{-1/2}(y-1)\right)dy
\]
\[
=t^{p+1}\int_\R|z|\rho(z) \sup_{[\left(1+n^{-1/2}z\right)t,t]}\left|h_{q+1}\right|dz.
\]
For $t\lesssim 1$ this is just bounded by a constant since $|h_{q+1}|$ is bounded. Take $t\gtrsim 1$. We again split the integral in two. For $z>-\frac{1}{2}n^{-1/2}$ and $t$ far enough (dependent on $q$) from $1$, $[\left(1+n^{-1/2}z\right)t,t]\subseteq [t/2,t]$ and $|h_{q+1}|$ is decreasing on this interval, so 
\[
t^{p+1}\int_{-\frac{1}{2}n^{1/2}}^\infty |z|\rho(z) \sup_{[\left(1+n^{-1/2}z\right)t,t]}\left|h_{q+1}\right|dz \leq t^{p+1}\left|h_{q+1}(t/2)\right| \int_\R \rho(z)|z|dz\lesssim_{p,q} 1.
\]
On the other hand, for a given $A\in \N_0$ and big enough $w$, 
\[
\int_w^\infty \rho(z)z^Adz = \frac{1}{\sqrt{2\pi}}\int_w^\infty e^{-\left(\frac{z^2}{2}-A\ln z\right)}dz\leq \int_w^\infty e^{-z}dz= e^{-w},
\]
so 
\[
t^{p+1}\int_{-\infty}^{-\frac{1}{2}n^{1/2}} |z|\rho(z) \sup_{[\left(1+n^{-1/2}z\right)t,t]}\left|h_{q+1}\right|dz \leq t^{p+1}\sup_\R \left|h_{q+1}\right| \int_{-\infty}^{-\frac{1}{2}n^{1/2}} |z|\rho(z)dz
\]
\[
\lesssim_q t^{p+1}e^{-\frac{1}{2}n^{1/2}}\leq n^{p+1}e^{-\frac{1}{2}n^{1/2}}\lesssim_p 1. 
\]
\end{proof}

\section{Proof of Theorem 1}

The following lemma is a well known property of the so-called flat Dirichlet distribution (cf. \cite{zbMATH03954145})

\begin{lemma}\label{unifsimplex}
If $P_1,\ldots,P_n$ are iid $\mathrm{Exp(1)}$, then the distribution of the random vector 
\[\frac{\left(P_1,\ldots,P_n\right)}{\sum_{k=1}^n P_k}\]
is $\mathrm{Unif}\Delta_{n-1}$. \end{lemma}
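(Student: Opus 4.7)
The plan is to carry out an explicit change of variables. First, I would parametrize $\Delta_{n-1}$ by its projection onto the first $n-1$ coordinates: define $T := \{(y_1,\ldots,y_{n-1}) \in (0,\infty)^{n-1} : y_1+\ldots+y_{n-1} < 1\}$ and identify $(y_1,\ldots,y_{n-1}) \in T$ with $(y_1,\ldots,y_{n-1},1-y_1-\ldots-y_{n-1}) \in \Delta_{n-1}$. Consider then the diffeomorphism $\Phi : (0,\infty)^n \to (0,\infty)\times T$ given by $\Phi(p_1,\ldots,p_n) = (s, p_1/s, \ldots, p_{n-1}/s)$ with $s := p_1+\ldots+p_n$, whose inverse sends $(s,y_1,\ldots,y_{n-1})$ to $(sy_1,\ldots,sy_{n-1},s(1-y_1-\ldots-y_{n-1}))$.

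Second, I would compute the Jacobian of $\Phi^{-1}$. A direct expansion (for instance along the first column of the differential, or by row operations subtracting $y_i$ times the first row from the subsequent rows) shows $|\det D\Phi^{-1}(s,y)| = s^{n-1}$. Hence the pushforward under $\Phi$ of the product density $e^{-\sum p_k}\,dp_1\cdots dp_n$ of $(P_1,\ldots,P_n)$ has density $e^{-s} s^{n-1}$ with respect to Lebesgue measure on $(0,\infty)\times T$.

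Third, the decisive observation is that this density factorizes as $(e^{-s}s^{n-1}) \cdot 1$, i.e., a pure function of $s$ times the constant function $1$ on $T$. Therefore $S := \sum P_k$ is independent of $(P_1/S,\ldots,P_{n-1}/S)$, and the latter is distributed uniformly on $T$ with respect to the $(n-1)$-dimensional Lebesgue measure on $\R^{n-1}$; equivalently, $(P_1/S,\ldots,P_n/S)$ has the image measure on $\Delta_{n-1}$ of the normalized Lebesgue measure on $T$ under $(y_1,\ldots,y_{n-1})\mapsto (y_1,\ldots,y_{n-1},1-\sum y_i)$.

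Finally, I would reconcile uniformity on $T$ with the definition of $\mathrm{Unif}\Delta_{n-1}$ via $\mathcal{H}^{n-1}$. The orthogonal projection $\R^n\to\R^{n-1}$ onto the first $n-1$ coordinates restricts to an affine bijection $\Delta_{n-1}\to T$ whose Jacobian is the constant $1/\sqrt{n}$, reflecting that the unit normal to $\Delta_{n-1}$ is $(1,\ldots,1)/\sqrt{n}$. Hence the pushforward of $\mathcal{H}^{n-1}$ on $\Delta_{n-1}$ to $T$ is exactly $\sqrt{n}$ times Lebesgue measure; after normalizing to a probability measure this multiplicative constant disappears, and the law of $(P_1/S,\ldots,P_n/S)$ coincides with $\mathrm{Unif}\Delta_{n-1}$. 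No step here is genuinely delicate; the only point that requires care is this last comparison between the intrinsic $\mathcal{H}^{n-1}$ measure on the simplex and projected Lebesgue measure on $T$.
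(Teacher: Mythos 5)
Your proof is correct and complete. The paper does not actually prove this lemma --- it cites it as a well-known property of the flat Dirichlet distribution --- so there is no argument of the paper to compare against. Your change-of-variables computation is the standard one: the Jacobian $|\det D\Phi^{-1}(s,y)| = s^{n-1}$ (e.g.\ by adding all rows of $D\Phi^{-1}$ to the last and expanding) yields the factorized density $e^{-s}s^{n-1}$ on $(0,\infty)\times T$, from which both the independence of $S$ from the normalized vector and the uniformity of the latter on $T$ with respect to Lebesgue measure follow. The last step, which you rightly flag as the one point requiring care, is also handled correctly: the lift $(y_1,\ldots,y_{n-1})\mapsto (y_1,\ldots,y_{n-1},1-\sum y_i)$ has columns $e_i-e_n$, whose Gram determinant is $\det(I_{n-1}+\mathbf{1}\mathbf{1}^T)=n$, giving constant Jacobian $\sqrt{n}$ (equivalently, the projection $\Delta_{n-1}\to T$ has Jacobian $|\langle (1,\ldots,1)/\sqrt{n},\,e_n\rangle|=1/\sqrt{n}$), and this constant disappears upon normalizing to a probability measure, so the law of $(P_1/S,\ldots,P_n/S)$ is indeed $\mathrm{Unif}\Delta_{n-1}$ in the Hausdorff-measure sense used in the paper.
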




\begin{proof}[Proof of Theorem 1]
Let us recall a rescaling of the Hermite-Genocchi
Formula \cite{zbMATH02166298} for finite differences:
\[ f\left[x_1,\ldots,x_n\right] = \sum_{k=1}^n \frac{f\left(x_k\right)}{\prod_{j\neq k}\left(x_k-x_j\right)}\]
\[=  \frac{1}{(n-1)!}\int_{\Delta_{n-1}} f^{(n-1)}\left(\ilsk{s}{x}\right)\frac{d\mathcal{H}^{n-1}(s)}{\mathcal{H}^{n-1}\left(\Delta_{n-1}\right)}.\]
In particular, for a given $t$, we have 
\[B(t):= \sum_{k=1}^n \frac{\left(x_k-t\right)_+^{n-2} }{\prod_{j\neq k} \left(x_k-x_j\right)} = \left(\cdot  - t\right)_+^{n-2}\left[x_1,\ldots,x_n\right]\]
Since $\left(\frac{d}{dx}\right)^{(n-1)}(x-t)_+^{n-2}= (n-2)!\delta_t$, for any continuous function $g$
\[(n-1)\ilsk{B}{g}=(n-1) \ilsk{t\mapsto (\cdot-t)_+^{n-2}\left[x_1,\ldots,x_n\right] }{g} \] 
\[=(n-1)\int_\R g(t)\frac{1}{(n-1)!}\int_{\Delta_{n-1}} (n-2)!\delta_t \left(\ilsk{s}{x}\right)\frac{d\mathcal{H}^{n-1}(s)}{\mathcal{H}^{n-1}\left(\Delta_{n-1}\right)}d t\] 
\[=\int_{\Delta_{n-1}} g\left(\ilsk{s}{x}\right)\frac{d\mathcal{H}^{n-1}(s)}{\mathcal{H}^{n-1}\left(\Delta_{n-1}\right)}d t\] 
\[=  \E g\left(\ilsk{\mathrm{Unif}\Delta_{n-1}}{x}\right)= \int_\R g(s)\mathrm{PDF}_{\ilsk{\mathrm{Unif}\Delta_{n-1}}{x}}(s)ds,\]
so $(n-1)B$ is precisely the density of $\ilsk{\mathrm{Unif}\Delta_{n-1}}{x}$. But
\[\ilsk{\mathrm{Unif}\Delta_{n-1}}{x}\sim  \frac{\sum x_k P_k}{\sum P_k}= \frac{1}{n}\frac{\sum x_k\left(P_k-1\right)}{1+\frac{1}{\sqrt{n}}\cdot \frac{\sum \left(P_k-1\right)}{\sqrt{n}}}=\frac{1}{n} \frac{Q_1}{1+n^{-1/2}Q_2}.\]
Therefore 
\[B\left(\frac{t}{n}\right)= \frac{1}{n-1}\mathrm{PDF}_{\frac{1}{n} \frac{Q_1}{1+n^{-1/2}Q_2}}\left(\frac{t}{n}\right)= \frac{n}{n-1}\mathrm{PDF}_{\frac{Q_1}{1+n^{-1/2}Q_2}}\left(t\right).
\]
Ultimately, for $|t|<n$,
\[\left|t^p \left(\frac{(-1)^q}{\sqrt{2\pi}}\he{q}{t}e^{-t^2/2} - \d_t^q B\left(\frac{t}{n}\right)
\right)  \right|= \left|t^p \d_t^q\left( B\left(\frac{t}{n}\right)- \mathrm{PDF}_{N(0,1)}(t)\right)\right|\]
\[\leq \frac{n}{n-1}\left|t^p \d_t^q\left( \mathrm{PDF}_{\frac{Q_1}{1+n^{-1/2}Q_2}}\left(t\right)- \mathrm{PDF}_{N(0,1)}(t)\right)\right|+ \frac{1}{n-1}\left|t^p \d_t^q\mathrm{PDF}_{N(0,1)}(t)\right|\]
\[\lesssim_{p,q} \left|t^p \d_t^q\left( \mathrm{PDF}_{\frac{Q_1}{1+n^{-1/2}Q_2}}\left(t\right)- \mathrm{PDF}_{\frac{N_1}{1+n^{-1/2}N_2}}(t)\right)\right|\]
\[+\left|t^p \d_t^q\left( \mathrm{PDF}_{\frac{N_1}{1+n^{-1/2}N_2}}(t)-\mathrm{PDF}_{N(0,1)}(t)\right)\right|+ \frac{1}{n-1}\left|t^p \d_t^q\mathrm{PDF}_{N(0,1)}(t)\right|\]
\[\lesssim  m^3 + n^{-1/2}+ n^{-1}\eqsim m^3,\]
where in the last step we used Theorem \ref{quotqcloseton}, Theorem \ref{puregauss} and a trivial bound for the Gaussian density. This concludes the proof for $|t|\leq n$.\par
If $|t|>n$, then $|t/n|>1$, which places $t/n$ outside of the support of $B$, which is $\left[\min x_k,\max x_k\right]\subset \max\left|x_k\right|\cdot [-1,1]\subset \sum x_k^2\cdot [-1,1]\subset [-1,1]$. If $n$ is big enough (depending on $p,q$), then $t\mapsto \left|t^{p+\frac{1}{2}}\he{q}{t}e^{-t^2/2}\right|$ is decreasing for $t>n$, so 
\[
\left|t^{p}\he{q}{t}e^{-t^2/2}\right|\leq t^{-\frac{1}{2}} n^p\he{q}{n}e^{-n^2/2}\leq n^{-1/2} \sup_{\nu\geq 1}\left|\nu^p\he{q}{\nu}e^{-\nu^2/2} \right|\lesssim_{p,q} m^3.
\]
\end{proof}

\begin{proof}[Proof of Corollary 2.]
If not for the lack of the factor $c_{n,r}:=\frac{r!\binom{n-2}{r}}{n^r}$ in front of the sum, this would be simply a restatement of Theorem 1. Since $r!\binom{n-2}{r}= (n-2)(n-3)\ldots (n-2-(r-1))=: (n-2)^{\underline{r}}$ is a monic polynomial in $n$ of degree $r$, 
\[ \frac{1}{c_{n,r}}-1= \frac{n^r}{(n-2)^{\underline{r}}}-1 = \frac{n^r- (n-2)^{\underline{r}}}{(n-2)^{\underline{r}}} = \frac{\text{poly of deg}\leq r-1}{\text{poly of deg}=r}(n) \lesssim_r n^{-1}.\]
Denoting the Hermite functions by $h_r(t)= (-1)^r e^{t^2/2}\d_t^r e^{-t^2/2}$, 
\[ \sup_t \left| t^p \d_t^q \left((-1)^r h_r(t) - \sum_{k=1}^n \frac{\left(x_k-\frac{t}{n}\right)_{+}^{n-2-r} }{\prod_{j\neq k} \left(x_k-x_j\right)}\right)\right|=\left\|h_r(t)-\frac{1}{c_{n,r}}\d_t^r B(t/n)\right\|_{\mathcal{S}^{p,q,\infty}} \]
\[\leq  \frac{1}{c_{n,r}} \left\|h_r(t)- \d_t^r B(t/n)\right\|_{\mathcal{S}^{p,q,\infty}}+ \left|\frac{1}{c_{n,r}}-1\right| \left\|h_r\right\|_{\mathcal{S}^{p,q,\infty}}.\]
The first summand is bounded by $m^3$, because $\frac{1}{c_{n,r}}=1+O(n^{-1})$, and $$\left\|h_r(t)- \d_t^r B(t/n)\right\|_{\mathcal{S}^{p,q,\infty}} = \left\|h_0(t)- B(t/n)\right\|_{\mathcal{S}^{p,q+r,\infty}}\lesssim_{p,q,r} m^3$$ by definition of $\mathcal{S}$ and by Theorem 1. The second one is simply $\lesssim_r n^{-1} \cdot \|h_0\|_{\mathcal{S}^{p,q+r,\infty}}\lesssim_{p,q,r} n^{-1}\leq m^3$.

\end{proof}

\section{Application for Laguerre polynomials}

Since the Fourier transform\footnote{which will be normalized as $\hat{f}(\xi)=\int_\R f(x)e^{-ix\xi}dx$ here} is continuous on the Schwartz space, we have 
\[\left\|\mathcal{F}_t\left( B(t/n)-\rho(t)\right)\right\|_{\mathcal{S}^{p,q,\infty}}=O_{p,q}(m^3).\]
We are able to explicitly calculate the Fourier transforms at hand. \par
Recall that $\d_t^{n-2}(x_k-t)^{n-2}_+= (-1)^{n-2}(n-2)!\mathmybb{1}_{(-\infty,x_k)}$ and $\d_t^{n-1}(x_k-t)^{n-2}_+= (-1)^{n-1}(n-2)!\delta_{x_k}$. Therefore, denoting $W(x)=\prod_{k=1}^n (x-x_k)$,
\[\d_t^{n-1}B(t)= (-1)^{n-1}(n-2)! \sum_k \frac{\delta_{x_k}}{W'(x_k)}.\]
By the usual $\mathcal{F}_t(\d_tf(t))(\xi)= i\xi \mathcal{F} f(\xi)$, 
\[ \mathcal{F}B(\xi)= \frac{1}{(i\xi)^{n-1}} \mathcal{F}_t(\d_t^{n-1}B(t))(\xi)= \frac{(-1)^{n-1}(n-2)!}{(i\xi)^{n-1}}\sum_k \frac{\mathcal{F}\delta_{x_k}(\xi)}{W'(x_k)} \]
\[= \frac{(-1)^{n-1}(n-2)!}{(i\xi)^{n-2}}\sum_k \frac{e^{-i\xi x_k}}{W'(x_k)}.\]
Thus,
\[
\mathcal{F}_t\left(B(t/n)\right)(\xi)= n\mathcal{F}B(n\xi)= \frac{(-1)^{n-1}(n-2)!\cdot n}{(in\xi)^{n-1}}\sum_k \frac{e^{-in\xi x_k}}{W'(x_k)}\]
\begin{equation}\label{ftr}= \frac{(n-2)!}{n^{n-2}\xi^{n-1}}i^{n-1}\sum_k \frac{e^{-in\xi x_k}}{W'(x_k)}
\end{equation}

Remembering that $\mathcal{F}\rho(\xi)=e^{-\xi^2/2}$ and $\d^r \rho = (-1)^r \he{r}{\cdot} \rho$ and the continuity of $\d$ in the Schwartz space,
\begin{equation}\label{fou}
    \|\frac{(n-2)!i^{n-1}}{n^{n-2}}\d_{\xi}^r\xi^{-(n-1)}\sum_k \frac{e^{-in\xi x_k}}{W'(x_k)} - (-1)^r \he{r}{\xi} \rho(\xi)\|_{\mathcal{S}^{p,q,\infty}}=O_{p,q}(m^3).
    \end{equation}
Now we are going compute the $r$-th derivative in this formula explicitly. As usual, $x^{\underline{y}}= x(x-1)\cdot\ldots\cdot (x-(y-1))$ 
and $x^{\overline{y}}=x(x+1)\cdot\ldots\cdot (x+y-1)$ 
stand for the falling and rising factorial respectively.
Note that $(-x)^{\underline{y}}= (-1)^y x^{\overline{y}}$. 

\[\d_{\xi}^r\xi^{-(n-1)}\sum_k \frac{e^{-in\xi x_k}}{W'(x_k)}=\sum_{j=0}^r\binom{r}{j} \d_{\xi}^{r-j}\xi^{-(n-1)}\d_{\xi}^j \sum_k \frac{e^{-in\xi x_k}}{W'(x_k)}\]
\[=\sum_{j=0}^r\binom{r}{j} (-(n-1))^{\underline{j}}\xi^{-(n-1)-j}(-inx_k)^{r-j} \sum_k \frac{e^{-in\xi x_k}}{W'(x_k)}\]
\[=\sum_k \frac{e^{-in\xi x_k}}{W'(x_k)} \sum_{j=0}^r\binom{r}{j}(-(n-1))^{\underline{j}} \xi^{-(n-1)-j}(-inx_k)^{r-j}\]
\[=\xi^{-(n-1)}(-in)^r\sum_k \frac{x_k^r e^{-in\xi x_k}}{W'(x_k)} \sum_{j=0}^r\binom{r}{j}(-(n-1))^{\underline{j}} (n\xi x_k)^{-j}\]
\[=\xi^{-(n-1)}(-in)^r\sum_k \frac{x_k^r e^{-in\xi x_k}}{W'(x_k)} \sum_{j=0}^r\frac{1}{j!} r^{\underline{j}}(-(n-1))^{\underline{j}} (n\xi x_k)^{-j}\]
\[= \xi^{-(n-1)}(-in)^r\sum_k \frac{x_k^r e^{-in\xi x_k}}{W'(x_k)} \sum_{j=0}^r\frac{1}{j!} (-r)^{\overline{j}}(n-1)^{\overline{j}} \left(\frac{1}{n\xi x_k}\right)^{j}\]
\[= \xi^{-(n-1)}(-in)^r\sum_k \frac{x_k^r e^{-in\xi x_k}}{W'(x_k)} \,_2 F_0\left(-r,n-1;\frac{1}{n\xi x_k}\right).\]\par

Since $\,_2 F_0\left(-r,n-1;\frac{1}{n\xi x_k}\right)=
r!(n\xi x_k)^{-r}L^{(-n-r+1)}_r(-n\xi x_k)$, (cf.\cite{DGV}) plugging the above into \eqref{fou} we get Corollary 3.

To obtain yet another corollary, this time concerning the probability distribution used in the proof of our main result, let us continue transforming formula 
\eqref{ftr}. We have further

\[\mathcal{F}_t\left(B(t/n)\right)(\xi)=\frac{(n-2)!}{n^{n-2}\xi^{n-1}}i^{n-1} \left(y\mapsto e^{-in\xi y}\right)\left[x_1,\ldots,x_n\right]\]
\[\overset{\text{Hermite-Genocchi}}{=} \frac{(n-2)!}{n^{n-2}\xi^{n-1}}i^{n-1}\frac{1}{(n-1)!}\E \left(\d^{n-1}_y e^{-in\xi y}\right)(\ilsk{x}{\mathrm{Unif}\Delta_{n-1}})\]
\[=\frac{i^{n-1}}{(n-1)\cdot n^{n-2}\xi^{n-1}}(-in\xi)^{n-1}\E e^{-in\xi\ilsk{x}{\mathrm{Unif}\Delta_{n-1}}}\]
\[=\frac{n}{n-1} \E e^{-in\xi\ilsk{x}{\mathrm{Unif}\Delta_{n-1}}},\]
which is not surprising, because we proved that
$(n-1)\frac{1}{n}B(t/n)$ is the density of the distribution $n\ilsk{x}{\mathrm{Unif}\Delta_{n-1}}$, so the Fourier transform of $B(t/n)$ is nothing but its characteristic function (up to $\xi\mapsto -\xi$). Since 
$\mathcal{F}\rho(\xi)=e^{-\xi^2/2}$,
 by taking the real and imaginary part and keeping in mind that for big enough $n$ the factor $\frac{n}{n-1}$ is close to 1, we infer the following. 

\begin{corollary}
Let $\mathrm{Unif}\Delta_{n-1}$ be a random vector uniformly distributed on the standard $(n-1)$-dimensional simplex and $p,q$ and $x$ satisfy the assumptions of Theorem 1. Then
 \[\left\|\E\cos\left(n\xi\ilsk{x}{\mathrm{Unif}\Delta_{n-1}}\right)-e^{-\xi^2/2}\right\|_{\mathcal{S}^{p,q,\infty}}=O_{p,q}(m^3),\]
 \[\left\|\E\sin\left(n\xi\ilsk{x}{\mathrm{Unif}\Delta_{n-1}}\right)\right\|_{\mathcal{S}^{p,q,\infty}}=O_{p,q}(m^3).\]
\end{corollary}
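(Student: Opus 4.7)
The plan is to read off this corollary from Theorem~\ref{mainthm} by Fourier transform, using the two facts already assembled just before the corollary statement. First, continuity of $\mathcal{F}$ on the Schwartz space applied to Theorem~\ref{mainthm} gives
\[\|\mathcal{F}_t(B(t/n)-\rho(t))\|_{\mathcal{S}^{p,q,\infty}}=O_{p,q}(m^3),\]
where $\rho(t)=(2\pi)^{-1/2}e^{-t^2/2}$. Second, we have the explicit computations
\[\mathcal{F}_t(B(t/n))(\xi)=\frac{n}{n-1}\,\E\, e^{-in\xi\ilsk{x}{U}},\qquad \mathcal{F}\rho(\xi)=e^{-\xi^2/2},\]
where I abbreviate $U:=\mathrm{Unif}\Delta_{n-1}$. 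Substituting both identities into the preceding display yields
\[\left\|\tfrac{n}{n-1}\,\E\, e^{-in\xi\ilsk{x}{U}}-e^{-\xi^2/2}\right\|_{\mathcal{S}^{p,q,\infty}}=O_{p,q}(m^3).\]

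The next step is to absorb the prefactor $\tfrac{n}{n-1}$. The identity
\[\E\, e^{-in\xi\ilsk{x}{U}}-e^{-\xi^2/2}=\tfrac{n-1}{n}\!\left[\tfrac{n}{n-1}\E\, e^{-in\xi\ilsk{x}{U}}-e^{-\xi^2/2}\right]-\tfrac{1}{n}\,e^{-\xi^2/2}\]
bounds the left-hand side in $\mathcal{S}^{p,q,\infty}$ by $O_{p,q}(m^3)+\tfrac{1}{n}\|e^{-\xi^2/2}\|_{\mathcal{S}^{p,q,\infty}}=O_{p,q}(m^3)+O_{p,q}(n^{-1})$. Since the standing assumption $m^3\eqsim\|x\|_{\ell^3}^3+n^{-1/2}$ gives $n^{-1}\leq n^{-1/2}\lesssim m^3$, we conclude
\[\left\|\E\, e^{-in\xi\ilsk{x}{U}}-e^{-\xi^2/2}\right\|_{\mathcal{S}^{p,q,\infty}}=O_{p,q}(m^3).\]

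Finally, I would split into real and imaginary parts. Expand $e^{-in\xi\ilsk{x}{U}}=\cos(n\xi\ilsk{x}{U})-i\sin(n\xi\ilsk{x}{U})$, exchange expectation with real and imaginary parts by linearity, and note that $e^{-\xi^2/2}$ is real-valued. Because the $\mathcal{S}^{p,q,\infty}$ seminorm of a complex-valued function dominates that of its real and imaginary parts separately, taking real and imaginary parts of the previous display produces exactly the two claimed estimates. There is no real obstacle here beyond bookkeeping: the only step requiring a moment's thought is the Schwartz-continuity of $\mathcal{F}$ at the level of our seminorms, which is standard since $\xi^p\partial_\xi^q\mathcal{F}(f)(\xi)=(-i)^{p+q}\mathcal{F}(\partial_t^p(t^q f))(\xi)$ has sup norm controlled by $\|\partial_t^p(t^q f)\|_{L^1}$, and Theorem~\ref{mainthm} applied with one auxiliary exponent raised by $2$ furnishes integrable tails for that $L^1$ norm at cost $O_{p,q}(m^3)$. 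The corollary is essentially the Fourier dual of Theorem~\ref{mainthm}, restricted to its real and imaginary parts.
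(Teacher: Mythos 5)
Your argument is correct and follows essentially the same route as the paper: transform the estimate of Theorem \ref{mainthm} via continuity of $\mathcal{F}$ in the Schwartz seminorms, identify $\mathcal{F}_t\left(B(t/n)\right)(\xi)=\tfrac{n}{n-1}\,\E e^{-in\xi\ilsk{x}{\mathrm{Unif}\Delta_{n-1}}}$ together with $\mathcal{F}\rho(\xi)=e^{-\xi^2/2}$, absorb the factor $\tfrac{n}{n-1}$ at cost $O(n^{-1})\lesssim m^3$, and take real and imaginary parts. Your extra justifications (the $L^1$ control behind the Fourier continuity via raising an exponent by $2$, and the explicit handling of the prefactor) are details the paper leaves implicit, but the approach is the same.
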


\bibliographystyle{plain}
\bibliography{lit}

	
\end{document}